\newtheorem*{theomain}{Main Theorem}
\newtheorem*{byproduct}{Corollary}
\newtheorem{thm}{Theorem}
\newtheorem{prop}[thm]{Proposition}
\newtheorem{df}[thm]{Definition}
\newtheorem{lem}[thm]{Lemma}
\def\N{\mathbb{N}}
\def\Z{\mathbb{Z}}
\def\N{\mathbb{N}}
\def\R{\mathbb{R}}
\def\MM{\mathcal{M}}
\def\mdim{\text{\rm mdim}}
\def\supp{\text{\rm supp}}
\def\diam{\text{\rm diam}}
\def\AA{\mathcal{A}}
\def\BB{\mathcal{B}}
\numberwithin{equation}{section}
\title{Mean topological dimension of induced amenable group actions}
	\author{Ruxi Shi}
\address
	{Laboratoire de Probabilites, Statistique et Modelisation, Sorbonne Universite, Paris 75005, France}
\email{ruxi.shi@upmc.fr}
\author{Guohua Zhang}
\address
{School of Mathematical Sciences and Shanghai Center for Mathematical Sciences, Fudan University, Shanghai 200433, China}
\email{chiaths.zhang@gmail.com}
\subjclass[2020]{}
\keywords{}
\begin{document}

	\begin{abstract}
In this paper we generalize \cite[Main Theorem]{bs2022} from actions of a single transformation to amenable group actions, which answers affirmatively the question raised in \cite{bs2022} by Burguet and the first-named author of the paper.
	\end{abstract}

	\maketitle
\section{Introduction}	

Dynamical system theory is the study of qualitative properties of group actions on spaces with certain structures. By a \emph{topological dynamical system} we mean a continuous action of a countably infinite discrete amenable group on a compact metric space.

\emph{Throughout the whole paper we let $\Gamma$ be a countably infinite discrete amenable group and $(X, \Gamma)$ a topological dynamical system.
We denote by $d$ any metric compatible with the topology on $X$ (and from now on we fix it).}
Let $\MM(X)$ be the space of all Borel probability measures on $X$ endowed with the weak-$*$ topology. Then $\Gamma$ acts continuously on $\MM(X)$ by push-forward map $g:  \mu \mapsto \mu\circ g^{-1}$ for every $g\in \Gamma$.
The topological dynamical system $(\MM(X), \Gamma)$ is called the \emph{induced system} of $(X,\Gamma)$ on the space of probability measures.

\smallskip

When considering a single transformation, that is, letting $\Gamma$ be the integer group $\Z$, Bauer and Sigmund have investigated in \cite{Sigmun} which  dynamical properties of $(\MM(X),\Z)$ are inherited from $(X,\Z)$. In particular, they observed that $h_{top}(\MM(X), \Z) = \infty$ once $h_{top}(X, \Z)> 0$, where $h_{top}(\MM(X), \Z)$ and $h_{top}(X, \Z)$ denote the topological entropy of the systems $(\MM(X), \Z)$ and $(X, \Z)$, respectively.
Twenty years later Glasner and Weiss showed in \cite{glasner1995quasi} that if $(X,\Z)$ has zero topological entropy, then so does $(\MM(X), \Z)$.
We remark that, when we talk about actions of a countably infinite discrete amenable group $\Gamma$ on a compact metric space, as did in \cite{Sigmun} it is direct to see that $h_{top}(\MM(X), \Gamma) = \infty$ once $h_{top}(X, \Gamma)> 0$, where $h_{top}(\MM(X), \Gamma)$ and $h_{top}(X, \Gamma)$ denote the topological entropy of $(\MM(X), \Gamma)$ and $(X, \Gamma)$, respectively. Though the proof is presented in
\cite{glasner1995quasi} for $\Z$-actions,
Glasner and Weiss pointed out in the same paper that it can be used to prove an analogous result for actions of amenable groups. Therefore we have the following equivalences:
\begin{equation}\label{eq11}
	h_{top}(X,\Gamma)>0\ \Longleftrightarrow\ h_{top}(\MM(X),\Gamma)>0\ \Longleftrightarrow\ h_{top}(\MM(X),\Gamma)=\infty.
\end{equation}
Since then there are many articles discussing different dynamical properties of the induced system, for example, see \cite{BNDV, LiuW} for uniform positive entropy, see \cite{LiYanYe} for recurrence and disjointness.

\smallskip

In 1999 Gromov introduced a new topological invariant for a topological dynamical system $(X, \Gamma)$ as a dynamical analogue of (topological covering) dimension \cite{G}, which is denoted by $\mdim(X, \Gamma)$ and called as {\it mean (topological) dimension} of $(X, \Gamma)$. A system with finite topological entropy or finite dimension always has zero mean dimension. The mean dimension of the $\Gamma$-shift on $d$-cubes $([0,1]^d)^{\Gamma}$ with $d\in \mathbb{N}$ is equal to $d$ (cf \cite[Proposition 3.3]{LindenstraussWeiss2000MeanTopologicalDimension}). However, in general computing the exact value of mean dimension of a topological dynamical system is very difficult \cite{T1, LL, T2, tsukamoto2019mean, burguet2021mean}.
Recently,
Burguet and the first-named author of the paper investigated in \cite{bs2022} the mean dimension of $\Z$-action on the space of probability measures. They proved the following equivalences as a complement of \eqref{eq11} in the case of $\Gamma= \Z$:
\begin{equation}\label{eq22}
	h_{top}(X,\Z)>0\ \Longleftrightarrow\ \mdim(\MM(X),\Z)>0\ \Longleftrightarrow\ \mdim(\MM(X),\Z)=\infty.
\end{equation}

\smallskip

Though the approach in \cite{bs2022} works for the action of congruent monotileable amenable groups,
it seems that the approach could not be applied directly to the action of
amenable groups which are not congruent monotileable. Thus, Burguet and the first-named author of the paper asked in \cite{bs2022} whether \eqref{eq22} holds for general amenable group actions.

\smallskip

We answer affirmatively the question in present paper by proving the following result:

\begin{theomain}
Let $(X, \Gamma)$ be a topological dynamical system, where $\Gamma$ is a countably infinite discrete amenable group. Then
 $$h_{top}(X, \Gamma)>0\ \Longleftrightarrow\ \mdim(\MM (X), \Gamma)>0\ \Longleftrightarrow\ \mdim(\MM (X), \Gamma)=\infty.$$
\end{theomain}

We recall that the equality $\mdim(\MM(X),m\Z)=m \cdot \mdim(\MM(X),\Z)$ for each $m\in \N$ plays an essential role in the proof of the equivalences \eqref{eq22} (here $m\Z$ denotes the restriction of the original action $\Z$ over its subgroup $m \Z$), that is,
the proof relies heavily on the periodic tiling structure of subgroups of $\Z$.
As we have remarked as above, the proof of $\mathbb{Z}$-actions presented in \cite{bs2022} works also for the action of congruent monotileable amenable groups. Recall that a \emph{monotile} in a discrete group is a finite set $F$ such that the group can be partitioned by translations of $F$, and a monotileable amenable group is a countable discrete group which has a F\o lner sequence consisting of monotiles.
Though it was shown in \cite{Weiss} that the class of monotileable amenable groups is very large, and that all linear amenable groups and all residually finite amenable groups are monotileable. It remains as an open question if any amenable group is monotileable.
Our proof of the Main Theorem is based on the tiling property Proposition \ref{tiling thm} of countable amenable groups developed in \cite{DHZ} (which is an improved version of quasi-tiling property built by Ornstein-Weiss in \cite{OW}),
and the proof is presented in details in \S \ref{main}. Once we have Proposition \ref{tiling thm} at hands, one can adapt the proof of \eqref{eq22} to action of general amenable groups.
 As shown by Proposition \ref{tiling thm}, the tiling of an amenable group is not necessarily a single tile, and so we have to deal with multi-tiles for amenable group actions. Thus the tool of generalized cubes introduced in \cite{bs2022} is not enough to prove the above Main Theorem. We shall extend the notion of generalized cubes to product of simplexes, prove that the properties explored in \cite{bs2022} remain valid in our setting, and then carry out the proof along the line of \cite{bs2022} for actions of amenable groups.

\smallskip

Following \cite{BS23}, given a topological dynamical system $(X, \Gamma)$,
a family $\mathcal{F}\subset C(X)$ is called {\it generating} if the linear span of $\{ f\circ g: f\in \mathcal{F}, g\in \Gamma\}$ is dense in $C(X)$, and we define the {\it topological multiplicity} $\text{Mult}(X, \Gamma)$ of the system $(X, \Gamma)$ as the minimal cardinality of the family $\mathcal{F}$ where $\mathcal{F}$ runs over all generating families of the system.

As a direct application of our Main Theorem, we could extend \cite[Proposition 3.5]{BS23} to actions of amenable groups as follows.

\begin{byproduct}
	Suppose that $\text{Mult}(X, \Gamma)$ is finite. Then $h_{top} (X, \Gamma)=0$.
\end{byproduct}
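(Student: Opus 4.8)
The plan is to derive the Corollary directly from the Main Theorem: I would show that finite topological multiplicity forces $\mdim(\MM(X),\Gamma)<\infty$, and then the equivalences in the Main Theorem rule out $h_{top}(X,\Gamma)>0$.

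So suppose $k:=\text{Mult}(X,\Gamma)<\infty$ and fix a generating family $\mathcal{F}=\{f_{1},\dots,f_{k}\}\subset C(X)$; put $C:=\max_{1\le i\le k}\|f_{i}\|_{\infty}$. The first step is to construct a $\Gamma$-equivariant topological embedding of the induced system into a cube shift. I would define
$$\Psi\colon\ \MM(X)\ \longrightarrow\ \big([-C,C]^{k}\big)^{\Gamma},\qquad \Psi(\mu)_{g}:=\Big(\int_{X}f_{i}(gx)\,d\mu(x)\Big)_{1\le i\le k}.$$
For each fixed $i$ and $g$ the function $f_{i}\circ g$ lies in $C(X)$, so $\mu\mapsto\int f_{i}\circ g\,d\mu$ is weak-$*$ continuous and hence $\Psi$ is continuous; a direct computation gives $\Psi(g'\mu)_{g}=\Psi(\mu)_{gg'}$, so $\Psi$ intertwines the induced action with a shift on the target; and $\Psi$ is injective because, $\mathcal{F}$ being generating, the numbers $\int f_{i}\circ g\,d\mu$ ($1\le i\le k$, $g\in\Gamma$) determine $\int f\,d\mu$ for every $f\in C(X)$, hence determine $\mu$. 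Since $\MM(X)$ is compact, $\Psi$ is a topological embedding onto a closed shift-invariant subset of $([-C,C]^{k})^{\Gamma}$.

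The second step is a dimension estimate. Using monotonicity of mean dimension under passing to closed invariant subsystems and the homeomorphism $[-C,C]^{k}\cong[0,1]^{k}$, I would conclude
$$\mdim(\MM(X),\Gamma)\ \le\ \mdim\big(([0,1]^{k})^{\Gamma},\Gamma\big)\ =\ k\ <\ \infty,$$
the last equality being the cited \cite[Proposition 3.3]{LindenstraussWeiss2000MeanTopologicalDimension}. Finally I would invoke the Main Theorem: if $h_{top}(X,\Gamma)>0$ held, then $\mdim(\MM(X),\Gamma)$ would equal $\infty$, contradicting the bound above; therefore $h_{top}(X,\Gamma)=0$.

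I do not expect a genuine obstacle here, as all the weight is carried by the Main Theorem itself; the only routine points needing attention are the bookkeeping of the equivariance convention for $\Psi$ (the shift it produces is a right shift, conjugate to the usual one and so with the same mean dimension) and confirming that the quoted value of the mean dimension of cube shifts is available for arbitrary countable amenable $\Gamma$.
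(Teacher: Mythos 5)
Your proposal is correct and follows essentially the same route as the paper: embed $(\MM(X),\Gamma)$ equivariantly and injectively into the cube shift $(([0,1]^{k})^{\Gamma},\Gamma)$ via integrals of the generating functions against $\Gamma$-translates, bound $\mdim(\MM(X),\Gamma)\le k$ using \cite[Proposition 3.3]{LindenstraussWeiss2000MeanTopologicalDimension}, and then conclude from the Main Theorem that positive entropy would force infinite mean dimension. The only cosmetic difference is that the paper normalizes the generating family to take values in $[0,1]$ rather than working with $[-C,C]^{k}$, and it uses the same right-shift convention you note.
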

\begin{proof}
By the assumption we take a generating family $\mathcal{F}=\{f_1, f_2, \dots, f_d\}$ with $d\in \N$ and $f_i: X\to [0,1]$ for all $1\le i\le d$. Let
$(([0,1]^d)^\Gamma, \Gamma)$ be the $\Gamma$-shift with
$$h: (x_{1, \gamma}, \cdots, x_{d, \gamma})_{\gamma\in \Gamma}\mapsto (x_{1, \gamma h}, \cdots, x_{d, \gamma h})_{\gamma\in \Gamma}.$$
We consider the map $\Psi: (\mathcal{M}(X), \Gamma)\to (([0,1]^d)^\Gamma, \Gamma)$ given by
	$$
	\mu \mapsto (\int f_1\circ g \ d\mu, \cdots, \int f_d\circ g \ d\mu)_{g\in \Gamma}.
	$$
It is clear that $\Psi$ is continuous. The map $\Psi$ is also equivariant, that is, $\Psi (h \mu)= h (\Psi \mu)$:
\begin{eqnarray*}
\big(\Psi (h \mu)\big)_g & = & \big(\int f_1\circ g \ d (h \mu), \cdots, \int f_d\circ g \ d(h \mu)\big) \\
& = & \big(\int f_1\circ g h \ d \mu, \cdots, \int f_d\circ g h \ d \mu\big) = (\Psi \mu)_{g h}= \big(h (\Psi \mu)\big)_g.
\end{eqnarray*}	
Furthermore, the map $\Psi$ is injective. In fact, if let $\mu_1, \mu_2\in \mathcal M(X)$ with $\Psi (\mu_1)=\Psi (\mu_2)$, i.e. $\int f_i \circ g \ d\mu_1=\int f_i \circ g \ d\mu_2$ for any $i=1, \cdots, d$ and all $g\in \Gamma$. Then by selection of the family $\mathcal{F}$ we have $\int f \ d\mu_1= \int f\ d\mu_2$ for all $f\in C(X)$, and then $\mu_1=\mu_2$. Summing up, the system $(\mathcal{M}(X), \Gamma)$ can be viewed as a subsystem of the $\Gamma$-shift $(([0,1]^d)^\Gamma, \Gamma)$. We remark that mean dimension is a topological invariant for a topological dynamical system, and hence one has $\mdim(\mathcal{M}(X), \Gamma)\le \mdim(([0,1]^d)^\Gamma, \Gamma)=d$. Thus the conclusion of
$h_{top}(X, \Gamma)=0$ follows directly from Main Theorem.
\end{proof}

The paper is organized as follows. In \S \ref{preli} we recall basic concepts and related properties about amenable group actions which will be used in later discussions, including the definition of amenable groups, mean dimension of amenable group actions and characterization of amenable group actions with positive entropy via combinatorial independence. In \S \ref{prod} we introduce the product of simplexes as an analogue of the notion of generalized
cubes developed in \cite{bs2022}, and proved a generalized Lebesgue's lemma corresponding to the one obtained in \cite{Lebesgue}. In \S \ref{main}, we
 extend the properties explored in \cite{bs2022} to our setting, and then adapt the proof of \eqref{eq22} to actions of general amenable groups.

\section{Preliminaries for amenable group actions} \label{preli}

In this section we recall concepts and basic properties about amenable group and amenable group actions which will be used in later discussion.

\subsection{Amenable group}

Denote by $\mathfrak{F}_\Gamma$ the collection of all finite non-empty subsets of $\Gamma$.
Recall that we have assumed $\Gamma$ to be a countably infinite discrete \emph{amenable} group, equivalently, it admits a \emph{F\o lner sequence} $\{F_n: n\in \N\}$, that is, each $F_n$ belongs to $\mathfrak{F}_\Gamma$, and if we denote by $|F_n|$ the cardinality of the set $F_n$ then
$$\lim_{n\rightarrow \infty} \frac{|F_n \Delta g F_n|}{|F_n|}= 0\ \ \ \ \ \ \text{for each}\ g\in \Gamma.$$

The following version of Ornstein-Weiss Lemma is taken from \cite[1.3.1]{G}.

\begin{lem} \label{OW-Lemma}
Let $f: \mathfrak{F}_\Gamma \rightarrow \mathbb{R}_+$ be an invariant subadditive function, that is,
$f (F g)= f (F)$ and $f (E\cup F)\le f (E)+ f (F)$ whenever $E, F\in \mathfrak{F}_\Gamma$ and $g\in \Gamma$.
Then the limit $\lim\limits_{n\to \infty} \frac{f (F_n)}{|F_n|}$ exists for any F\o lner sequence $\{F_n: n\in\mathbb{N}\}$ of $\Gamma$, and the value of the limit is independent of selection of the F\o lner sequence.
\end{lem}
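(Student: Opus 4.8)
The plan is to obtain the lemma from the Ornstein--Weiss quasi-tiling machinery for countable amenable groups (\cite{OW}; alternatively one may use the exact tilings furnished by Proposition~\ref{tiling thm} from \cite{DHZ}). The first step is a reduction: it suffices to show that for \emph{any} two F\o lner sequences $\{F_n\}$ and $\{F'_m\}$ of $\Gamma$ one has
$$
\limsup_{n\to\infty}\frac{f(F_n)}{|F_n|}\ \le\ \liminf_{m\to\infty}\frac{f(F'_m)}{|F'_m|}.
$$
Indeed, applying this with $\{F'_m\}=\{F_n\}$ gives existence of the limit along each F\o lner sequence, and the inequality for an arbitrary pair then forces all these limits to coincide, i.e.\ the value is independent of the chosen sequence.

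Two elementary remarks will be used repeatedly. Writing $e$ for the identity of $\Gamma$, invariance on singletons gives $f(\{g\})=f(\{e\}g)=f(\{e\})=:M$ for every $g\in\Gamma$, so subadditivity over the decomposition $E=\bigcup_{g\in E}\{g\}$ yields the uniform bound $f(E)\le M|E|$ for all $E\in\mathfrak{F}_\Gamma$. Moreover subadditivity iterates to finite families: $f\!\left(\bigcup_j E_j\right)\le\sum_j f(E_j)$.

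Now fix $\epsilon\in(0,1)$ and put $L:=\liminf_m f(F'_m)/|F'_m|$. The quasi-tiling lemma provides an integer $N=N(\epsilon)$ such that, for every sufficiently rapidly increasing ``tower'' $T_1,\dots,T_N\in\mathfrak{F}_\Gamma$ (each $T_{i+1}$ sufficiently invariant relative to $T_i$), every $A\in\mathfrak{F}_\Gamma$ that is sufficiently invariant relative to $T_N$ admits an $\epsilon$-quasi-tiling by right translates of the $T_i$: there are finite center sets $C_1,\dots,C_N\subseteq\Gamma$ so that the translates $\{T_ic:1\le i\le N,\ c\in C_i\}$ are pairwise disjoint, all contained in $A$, and $\sum_{i=1}^N|C_i|\,|T_i|\ge(1-\epsilon)|A|$. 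Since the members of $\{F'_m\}$ become arbitrarily invariant while infinitely many of them have density arbitrarily close to $L$, one may extract and re-index such a tower $T_1,\dots,T_N$ \emph{from within} $\{F'_m\}$ which in addition satisfies $f(T_i)\le(L+\epsilon)|T_i|$ for all $i$. Finally choose $n$ so large that $F_n$ is sufficiently invariant relative to $T_N$, quasi-tile $A:=F_n$ accordingly, and set $A'=\bigcup_i\bigcup_{c\in C_i}T_ic$ (the tiled part, a disjoint union) and $B=F_n\setminus A'$, so that $|B|\le\epsilon|F_n|$ and $\sum_i|C_i|\,|T_i|=|A'|\le|F_n|$. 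Using finite subadditivity, right-invariance $f(T_ic)=f(T_i)$, the tower estimate, and $f(B)\le M|B|$,
\begin{align*}
f(F_n)\ &\le\ f(A')+f(B)\ \le\ \sum_{i=1}^{N}\sum_{c\in C_i}f(T_ic)+M|B|\\
&=\ \sum_{i=1}^{N}\sum_{c\in C_i}f(T_i)+M|B|\ \le\ (L+\epsilon)\sum_{i=1}^{N}|C_i|\,|T_i|+M\epsilon|F_n|\\
&\le\ (L+\epsilon)|F_n|+M\epsilon|F_n|\ =\ \big(L+(M+1)\epsilon\big)|F_n|.
\end{align*}
Dividing by $|F_n|$ and letting $n\to\infty$ gives $\limsup_n f(F_n)/|F_n|\le L+(M+1)\epsilon$; letting $\epsilon\to0$ yields the displayed inequality, and hence the lemma.

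The main obstacle is the quasi-tiling lemma itself: that a sufficiently invariant finite subset of $\Gamma$ can be covered, up to a prescribed small proportion, by pairwise disjoint translates of boundedly many prescribed, highly invariant shapes. This is the technical heart of Ornstein--Weiss theory, and one genuinely needs a whole tower $T_1,\dots,T_N$ of shapes rather than a single shape — precisely the non-monotileability phenomenon flagged in the introduction. Everything else above is routine bookkeeping with subadditivity, right-invariance, and the uniform bound $f(E)\le M|E|$.
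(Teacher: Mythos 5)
Your argument is correct in substance, but note that the paper does not prove this lemma at all: it is quoted verbatim from Gromov \cite[1.3.1]{G} and used as a black box, so there is no internal proof to compare against. What you have written is essentially the standard proof of the Ornstein--Weiss lemma (as in \cite{OW}, or the appendix of \cite{LindenstraussWeiss2000MeanTopologicalDimension}): reduce to $\limsup_n f(F_n)/|F_n|\le\liminf_m f(F'_m)/|F'_m|$ for an arbitrary pair of F\o lner sequences, extract a quasi-tiling tower from $\{F'_m\}$ along indices where the density is within $\epsilon$ of the liminf, and use right-invariance, finite subadditivity and the bound $f(E)\le M|E|$ with $M=f(\{e\})$. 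Two small points deserve care. First, the classical quasi-tiling theorem gives $\epsilon$-disjoint (not pairwise disjoint) translates $T_ic$ covering a $(1-\epsilon)$-proportion of $F_n$; your exactly-disjoint formulation with prescribed shapes drawn from a given F\o lner sequence is stronger than what \cite{OW} provides. This is harmless: disjointness is never used in the subadditivity step $f(A')\le\sum_{i,c}f(T_ic)$, and $\epsilon$-disjointness still yields $\sum_i|C_i|\,|T_i|\le(1-\epsilon)^{-1}|F_n|$, so the final bound only picks up a factor $(1-\epsilon)^{-1}$ and the conclusion is unchanged. Second, the parenthetical suggestion to substitute the exact tilings of Proposition~\ref{tiling thm} (from \cite{DHZ}) is not a drop-in replacement, since there the tile shapes come from a specially constructed F\o lner sequence rather than from the arbitrary sequence $\{F'_m\}$ whose liminf you need to hit; making that route work requires an extra comparison argument. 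With the $\epsilon$-disjoint version of quasi-tiling cited in place of your disjoint version, your proof is complete modulo that (genuinely deep, but standard) external input, which is a perfectly reasonable division of labour given that the paper itself simply cites the result.
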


A F\o lner sequence $\{F_n: n\in \N\}$ is \emph{tempered} if there is a constant $M> 0$ such that
$$|\bigcup_{k= 1}^{n} F_k^{- 1} F_{n+ 1}|\le M\cdot |F_{n+ 1}|\ \ \ \ \ \ \text{for every}\ n\in \N.$$
The condition was introduced by Shulman, which will be useful in our discussions. It was shown by Lindenstrauss in \cite{Linden} that every F\o lner sequence has a tempered
subsequence and every tempered F\o lner sequence is a pointwise ergodic sequence in $L_1$ convergence.

We shall use the following variant of \cite[Theorem 5.2]{DHZ}.

\begin{prop} \label{tiling thm}
Let $\Gamma$ be a countably infinite discrete amenable group. Then
there exists a well-chosen F\o lner sequence $\{F_n: n\in\N\}$ of $\Gamma$ (each $F_n$ contains the unit element $e$), which breaks into countably many portions $\{F_1, F_2, \cdots, F_{n_1}\}$, $\{F_{n_1+1},F_{n_1+2},\cdots,F_{n_2}\}$, $\{F_{n_2+1},F_{n_2+2},\cdots,F_{n_3}\}$, $\cdots$, and finite subsets $C_{k, n}, k< n$, possibly empty, such that
\begin{enumerate}

\item $\{F_{n_i + 1}: i\in\N\}$ is a tempered F\o lner sequence of $\Gamma$, and that

\item for any $i\in \N$, each set $F_n$ with $n>n_{i+1}$ is a disjoint union of shifted sets from the $(i+1)$-th portion:
\begin{equation*}
F_n=\bigsqcup_{k=n_i+1}^{n_{i+1}}\ \bigsqcup_{c\in C_{k,n}}F_k c\ \ \ \ \ \ (\text{we set $n_0= 0$ by convention}).
\end{equation*}
\end{enumerate}
\end{prop}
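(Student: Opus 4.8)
The plan is to deduce the statement from the original tiling theorem of Downarowicz--Huczek--Zhang \cite[Theorem 5.2]{DHZ}, which provides a \emph{congruent} sequence of tilings $\mathcal{T}_1,\mathcal{T}_2,\dots$ of $\Gamma$: each $\mathcal{T}_k$ is a partition of $\Gamma$ into the right translates of finitely many \emph{shapes} (finite subsets of $\Gamma$, each normalized so as to contain $e$), every tile of $\mathcal{T}_{k+1}$ is a disjoint union of tiles of $\mathcal{T}_k$, and for every $K\in\mathfrak{F}_\Gamma$ and every $\varepsilon>0$ all shapes of $\mathcal{T}_k$ are $(K,\varepsilon)$-invariant once $k$ is large. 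First I would isolate two elementary facts. (i) ``Being a disjoint union of tiles of'' is transitive: a short descending induction on $j-k$ shows that for $k<j$ every tile of $\mathcal{T}_j$ is a disjoint union of tiles of $\mathcal{T}_k$, and hence, after a right translation, every shape of $\mathcal{T}_j$ is a disjoint union of right translates of shapes of $\mathcal{T}_k$. (ii) Picking one shape $P_k$ from each $\mathcal{T}_k$ produces a F\o lner sequence $(P_k)_{k\in\N}$ of $\Gamma$, because the invariance property of \cite[Theorem 5.2]{DHZ} forces $|P_k\Delta gP_k|/|P_k|\to 0$ for every $g\in\Gamma$.

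Next I would install temperedness. By Lindenstrauss' theorem \cite{Linden}, the F\o lner sequence $(P_k)_k$ admits a tempered subsequence $(P_{k_i})_{i\in\N}$. Discard every tiling except $\mathcal{T}_{k_1},\mathcal{T}_{k_2},\dots$ and relabel the survivors as $\mathcal{T}'_1,\mathcal{T}'_2,\dots$; by fact (i) this relabelled sequence is again congruent, its shapes still become arbitrarily invariant, and the distinguished shapes $P_{k_i}$ still form a tempered F\o lner sequence. Now enumerate the (finitely many) shapes of $\mathcal{T}'_i$ as $F_{n_{i-1}+1},\dots,F_{n_i}$ (with $n_0:=0$), always listing the distinguished shape first, i.e.\ $F_{n_{i-1}+1}=P_{k_i}$. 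Then the $i$-th portion is precisely the set of shapes of $\mathcal{T}'_i$, every portion is finite, every $F_n$ contains $e$, and $\{F_{n_i+1}:i\in\N\}$ is (a tail of) the tempered F\o lner sequence $(P_{k_i})_i$; this is part (1).

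Finally I would extract the center sets and verify part (2). Fix $i$ and $n>n_{i+1}$, so that $F_n$ is a shape of $\mathcal{T}'_j$ for some $j\ge i+2$; choose $c_0\in\Gamma$ with $F_nc_0$ a tile of $\mathcal{T}'_j$. Since $i+1<j$, fact (i) gives that $F_nc_0$ is a disjoint union of whole tiles of $\mathcal{T}'_{i+1}$, each of the form $F_kc$ with $k$ in the $(i+1)$-st portion, i.e.\ $n_i<k\le n_{i+1}$; note $n>n_{i+1}\ge k$, so indeed $k<n$. Translating on the right by $c_0^{-1}$ and grouping by the shape $F_k$ yields
\[
F_n=\bigsqcup_{k=n_i+1}^{n_{i+1}}\ \bigsqcup_{c\in C_{k,n}}F_kc,\qquad
C_{k,n}:=\{\,cc_0^{-1}\in\Gamma:\ F_kc\ \text{is a tile of}\ \mathcal{T}'_{i+1}\ \text{with}\ F_kc\subseteq F_nc_0\,\},
\]
a finite, possibly empty subset of $\Gamma$; since $k$ already determines $i$, the family $\{C_{k,n}\}$ is unambiguous. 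The one genuinely delicate point is making temperedness coexist with the nested tiling structure: it is handled by passing to the tempered subsequence \emph{at the level of tilings} rather than of arbitrary F\o lner sets -- exploiting the freedom in \cite[Theorem 5.2]{DHZ} to make all shapes as invariant as desired -- together with the transitivity in fact (i), which guarantees that discarding tilings preserves congruence. Everything else is bookkeeping with the cut points $n_i$.
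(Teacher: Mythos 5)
Your proof is correct and takes essentially the same route as the paper: both deduce the proposition from \cite[Theorem 5.2]{DHZ} and obtain part (1) by using Lindenstrauss' theorem to pass to a subsequence of the distinguished leading shapes that is tempered (tails/subsequences of tempered sequences remain tempered), then discard or merge the intervening tilings and reassemble the center sets $C_{k,n}$. The only difference is one of detail: you make explicit the transitivity of congruence and the shape-versus-tile translation bookkeeping defining $C_{k,n}$, which the paper subsumes in its restatement of \cite[Theorem 5.2]{DHZ} and in the phrase ``with appropriate construction for the subsets $C_{k,n}$''.
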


\begin{proof}
We only need to show that by an appropriate choice the sequence $\{F_{n_i + 1}: i\in\N\}$ will be a tempered F\o lner sequence of $\Gamma$. In fact, by \cite[Theorem 5.2]{DHZ},
there exists a F\o lner sequence $\{F_n^*: n\in\N\}$ of $\Gamma$, with each element containing $e$ and the sequence breaking into countably many portions $\{F_1^*, F_2^*, \cdots, F_{m_1}^*\}$, $\{F_{m_1+1}^*,F_{m_1+2}^*,\cdots,F_{m_2}^*\}$, $\cdots$, and finite subsets $C_{k, n}^*, k< n$, possibly empty, such that
 every set $F_n^*$ with $n>m_{i+1}$ is a disjoint union of shifted sets from the $(i+1)$-th portion:
\begin{equation*}
F_n^*=\bigsqcup_{k=m_i+1}^{m_{i+1}}\ \bigsqcup_{c\in C_{k,n}^*}F_k^* c\ \ \ \ \ \ (\text{we set $m_0= 0$ by convention}).
\end{equation*}
By choosing subsequence we may also take a tempered F\o lner sequence $\{F_{m_{i_k}+ 1}^*: k\in\N\}$. Now for each $k\in \N$ we set $F_{n_{k - 1} + 1} = F^*_{m_{i_k - 1} + 1}, F_{n_{k - 1} + 2} = F^*_{m_{i_k - 1} + 2}, \cdots, F_{n_k}= F^*_{m_{i_k}}$. It is not hard to check that the sequence $\{F_1, F_2, \cdots, F_{n_1}\}$, $\{F_{n_1+1},F_{n_1+2},\cdots,F_{n_2}\}$, $\cdots$ is the required F\o lner sequence (with appropriate construction for the subsets $C_{k, n}, k< n$).
\end{proof}

\subsection{Mean topological dimension}
Let $\mathcal{A}$ and $\mathcal{B}$ be two finite open covers of $X$. Set $\AA \vee \BB = \{U\cap V: U\in \AA, V\in \BB \}$. We say that $\mathcal{B}$ is \textit{finer} than $\mathcal{A}$, and write $\mathcal{B}\succ \mathcal{A}$, if every element of $\mathcal{B}$ is contained in some element of $\mathcal{A}$.
We define the order $\rm ord (\mathcal A )$ as
	$$
	\text{\rm ord}(\mathcal{A})=\sup_{x\in X} \sum_{A\in \mathcal{A}} 1_A(x)-1,
	$$
and then define the quantity $D(\mathcal{A})$ to be the minimal $\text{ord}(\mathcal{B})$ where $\mathcal{B}$ ranges over all finite open covers of $X$ which is finer than $\mathcal{A}$.
It is easy to check $D(\AA\vee \BB)\le D(\AA)+D(\BB)$, and that if $\mathcal{B}\succ \mathcal{A}$ then $D(\mathcal{B})\ge D(\mathcal{A})$. For each $F\in \mathfrak{F}_\Gamma$ we set $\AA_F= \bigvee_{g\in F} g^{- 1} \AA$. We remark that $0\le D (\AA_F) = D (\AA_{F g})$ and $D (\AA_{E\cup F})\le D (\AA_E)+ D (\AA_F)$ whenever $E, F\in \mathfrak{F}_\Gamma$ and $g\in \Gamma$, see for example \cite[Proposition 10.2.1]{Coo15}.
	
Now we recall the {\it mean (topological) dimension} of $(X, \Gamma)$ which is defined as
	$$
	\mdim(X, \Gamma)=\sup_{\alpha} \lim_{n\to \infty} \frac{1}{|F_n|} D (\bigvee_{g\in F_n} g^{- 1} \alpha),
	$$
	where $\alpha$ ranges over all finite open covers of $X$. By above arguments, the existence of the above limit follows from the well-known Ornstein-Weiss Lemma (cf Lemma \ref{OW-Lemma}).

\smallskip

In the following we recall an equivalent definition of mean (topological) dimension using metric approach.
We refer to \cite[\S 10.4]{Coo15} for more details.

Firstly, we recall the \textit{(topological) dimension} $\text{dim}(X)$ of $X$ defined as
the supremum $D(\mathcal{A})$,
	where $\mathcal{A}$ ranges over all finite open covers of $X$. Note that we have fixed $d$ to be a compatible metric over $X$.
	 For a set $Z$ and $\epsilon>0$, a map $f:X\to Z$ is called \textit{$(d, \epsilon)$-injective} if $\diam_d(f^{-1}(z))<\epsilon$ for all $z\in Z$, where $\diam_d(f^{-1}(z))$ denotes the diameter of the subset $f^{-1}(z)$ (with respect to the metric $d$). The \textit{metric $\epsilon$-dimension} $\text{dim}_\epsilon (X, d)$ is given by
	$$
	\text{dim}_\epsilon (X, d)=\inf_Y \text{dim}(Y),
	$$
where $Y$ ranges over all compact metric spaces admitting $(d, \epsilon)$-injective continuous map $f:X\to Y$. Then $\text{dim}_\epsilon (X, d)$ goes increasingly to $\text{dim}(X)$ when $\epsilon$ goes to zero.

Now for each $F\in \mathfrak{F}_\Gamma$ we introduce a new metric
$$d_F (x_1, x_2)= \max \{d (g x_1, g x_2): g\in F\},\ \ \ \ \ \ \forall x_1, x_2\in X.$$
 It was proved in \cite[Proposition 10.4.1]{Coo15} that $0\le \text{dim}_\epsilon (X, d_F)= \text{dim}_\epsilon (X, d_{F g})$ and $\text{dim}_\epsilon (X, d_{E\cup F})\le \text{dim}_\epsilon (X, d_E)+ \text{dim}_\epsilon (X, d_F)$ whenever $E, F\in \mathfrak{F}_\Gamma$ and $g\in \Gamma$. We set
 $$\mdim_d(X, \Gamma, \epsilon)=\lim_{n\to \infty} \frac{1}{|F_n|} \dim_{\epsilon}(X, d_{F_n}),$$
where the existence of the above limit follows again from the well-known Ornstein-Weiss Lemma. Thus by
\cite[Theorem 10.4.2]{Coo15} one has
	\begin{equation*}\label{metricdef}
	\mdim(X,\Gamma)=\lim_{\epsilon \rightarrow 0} \mdim_d(X, \Gamma, \epsilon).
	\end{equation*}
In fact, the limit in $\epsilon$ in the above definition is also the supremum over all $\epsilon> 0$.
	
	\subsection{Topological entropy}

Since topological entropy plays only a marginal role here, we shall not present its lengthy original definition. We remark that for a topological dynamical system the positive entropy is equivalent to independent behavior of local structure in the system along positive density subsets of iterates, for details see \cite{glasner1995quasi, HuangYe, ker}.

The following result is a variant of \cite[Proposition 3.9 (2) and Proposition 3.23]{ker}, which is commented for actions of countable amenable groups at the end of \cite[\S 3]{ker}.

\begin{lem}\label{keli}
Assume that $(X, \Gamma)$ has positive topological entropy, and let $\{F_{n}: n\in \N\}$ be a tempered F\o lner sequence of $\Gamma$. Then there exist non-empty open subsets $U_0, U_1$ of $X$ with disjoint closures, $\delta>0$ and for each $n\in \N$ a set $J_n\subset F_n$ with $|J_n|>\delta |F_n|$, such that
$$\bigcap_{g\in J_n}g^{-1} U_{\zeta(g)}\neq \emptyset\ \ \ \ \ \ \text{for any}\ \zeta: J_n\rightarrow \{0,1\}.$$
\end{lem}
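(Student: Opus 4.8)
The statement is essentially the Kerr–Li combinatorial-independence characterization of positive entropy for amenable group actions, re-packaged so that the independence density is realized \emph{inside} a prescribed tempered F\o lner sequence. I would start from the known fact (Kerr–Li, \cite{ker}, Proposition 3.9(2) and Proposition 3.23, together with the amenable-group comments at the end of \S 3 there) that positive topological entropy of $(X,\Gamma)$ is equivalent to the existence of a pair $(U_0,U_1)$ of non-empty open sets with disjoint closures that has \emph{positive independence density}: there is $q>0$ such that every $F\in\mathfrak F_\Gamma$ contains a subset $I\subset F$ with $|I|\ge q|F|$ which is an independence set for the pair $(U_0,U_1)$, meaning $\bigcap_{g\in I}g^{-1}U_{\zeta(g)}\neq\emptyset$ for every $\zeta\colon I\to\{0,1\}$. (The positive-density formulation does not depend on the choice of F\o lner sequence, by the Ornstein–Weiss lemma applied to the subadditive invariant function $F\mapsto\varphi_{U_0,U_1}(F)$ recording the maximal size of an independence subset of $F$; this is exactly the kind of quantity Lemma \ref{OW-Lemma} governs.)

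Granting this, the lemma is immediate: fix the tempered F\o lner sequence $\{F_n:n\in\N\}$ given in the hypothesis, take $(U_0,U_1)$ and $q>0$ as above, set $\delta=q/2$ (any $\delta<q$ works, a strict inequality being harmless), and for each $n$ let $J_n\subset F_n$ be an independence set with $|J_n|\ge q|F_n|>\delta|F_n|$. The defining property of an independence set is precisely the displayed conclusion $\bigcap_{g\in J_n}g^{-1}U_{\zeta(g)}\neq\emptyset$ for all $\zeta\colon J_n\to\{0,1\}$. So the only real content is extracting the positive-independence-density pair from positive entropy in the amenable-group setting.

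The main obstacle — really the only one — is citing/justifying that extraction at the right level of generality. For $\Z$-actions this is classical (Glasner–Weiss, Huang–Ye, Kerr–Li); for countable amenable $\Gamma$ one needs the amenable version of Kerr–Li's "entropy via independence" machinery, i.e.\ that $h_{top}(X,\Gamma)>0$ forces the existence of a non-diagonal independence pair (an "IE-pair" in Kerr–Li terminology) with positive independence density, and that an IE-pair of distinct points can be separated by open sets with disjoint closures whose preimages still realize the independence. I would handle this by quoting \cite{ker} directly, noting as the authors there do that Propositions 3.9(2) and 3.23 and the surrounding local-entropy theory are stated, or routinely transcribed, for countable amenable group actions; the subadditivity/invariance of the independence-density function then lets one pass from "some F\o lner sequence" to "the given tempered F\o lner sequence $\{F_n\}$" via Lemma \ref{OW-Lemma}. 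No further calculation is needed — the proof is a one-paragraph deduction from the cited result.
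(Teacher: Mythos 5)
Your proposal is correct and follows essentially the same route as the paper, which offers no independent proof of Lemma \ref{keli} but simply presents it as a variant of \cite[Propositions 3.9(2) and 3.23]{ker} together with the amenable-group remarks at the end of \S 3 there; your extraction of the pair $(U_0,U_1)$ with positive independence density and the choice of $J_n\subset F_n$ (adjusting $\delta$ for the finitely many small $n$, via Lemma \ref{OW-Lemma} applied to the subadditive invariant function $F\mapsto\varphi_{U_0,U_1}(F)$) is exactly the intended deduction.
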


\section{Product of simplexes and generalized Lebesgue's lemma} \label{prod}

Lebesgue proved in \cite{Lebesgue} that cubes of different (topological) dimensions are not homeomorphic, where the following lemma plays a key role.

\begin{lem}\label{Lebesgue}
Let $\alpha$ be a finite open cover of the unit cube
$[0, 1]^n$. Suppose that there is no element of $\alpha$ meeting two opposite faces of $[0, 1]^n$.
Then we have $\rm ord (\alpha) \geq n$.
\end{lem}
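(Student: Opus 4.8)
The plan is to prove Lebesgue's covering lemma by reducing it, via a standard combinatorial-topological argument, to a non-retraction (or degree) statement about the boundary of the cube. First I would fix notation: write the $n$ pairs of opposite faces of $[0,1]^n$ as $A_i^0 = \{x : x_i = 0\}$ and $A_i^1 = \{x : x_i = 1\}$ for $1 \le i \le n$. The hypothesis says every element of $\alpha$ misses at least one of the two faces in each pair. I would then argue by contradiction: suppose $\mathrm{ord}(\alpha) \le n-1$, i.e.\ no point of $[0,1]^n$ lies in $n+1$ distinct members of $\alpha$, equivalently every point lies in at most $n$ members.

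The core construction is a partition of unity and a ``coloring'' map. Take a finite open cover $\alpha = \{U_1, \dots, U_m\}$ and a subordinate partition of unity $\{\varphi_j\}_{j=1}^m$. For each $j$ pick, using the hypothesis, an index $i(j) \in \{1,\dots,n\}$ and a sign $\epsilon(j) \in \{0,1\}$ such that $U_j \cap A_{i(j)}^{\epsilon(j)} = \emptyset$; so on $U_j$ we are free to ``push'' toward the face $A_{i(j)}^{\epsilon(j)}$. Define $F : [0,1]^n \to [0,1]^n$ by letting its $i$-th coordinate be
$$F_i(x) = x_i + \sum_{j : i(j) = i,\, \epsilon(j) = 0} \varphi_j(x)\cdot(\text{something pushing to }0) \;-\; \sum_{j: i(j)=i,\,\epsilon(j)=1}\varphi_j(x)\cdot(\text{pushing to }1),$$
arranged so that $F$ maps $[0,1]^n$ into itself, agrees with a map into the boundary $\partial([0,1]^n)$ because the low order of $\alpha$ forces, at each $x$, the active summands to ``use up'' all $n$ coordinates and drive $F(x)$ to some face, and $F$ restricted to $\partial([0,1]^n)$ is homotopic to the identity through maps of pairs $([0,1]^n,\partial)$. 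This yields a retraction of $[0,1]^n$ onto its boundary $S^{n-1}$, contradicting the no-retraction theorem (equivalently, Brouwer's fixed point theorem). An alternative, more self-contained route is to use the combinatorial Knaster--Kuratowski--Mazurkiewicz (KKM) lemma / Sperner's lemma: refine $\alpha$ to a fine triangulation, use the face-avoidance to define a Sperner-type labeling of vertices, extract a fully-labeled simplex, and conclude that some point is covered by $n+1$ sets, contradicting $\mathrm{ord}(\alpha) \le n-1$; this keeps everything at the level of finite combinatorics and the pigeonhole principle.

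The main obstacle is making the ``push to a face'' map $F$ genuinely land in $\partial([0,1]^n)$ and genuinely be a retraction: one must verify that the order hypothesis exactly cancels the degrees of freedom, that $F$ does not leave the cube, and that $F|_{\partial}$ has nonzero degree (is homotopic to the identity rel the relevant faces). This bookkeeping—tracking which coordinates are ``saturated'' at each point and checking the boundary behavior on each face $A_i^\epsilon$—is the delicate part; the Sperner-lemma approach trades this analytic bookkeeping for a careful but elementary verification that the labeling is a legitimate Sperner labeling on each face, which is why I would likely present that version. Either way, once the reduction is in place the conclusion $\mathrm{ord}(\alpha) \ge n$ is immediate.
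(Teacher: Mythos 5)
Your overall architecture---a partition of unity subordinate to $\alpha$, a map built from it, and a Brouwer-type contradiction---is the right template, and it matches what the paper uses for the generalization Lemma~\ref{gen} (the paper states Lemma~\ref{Lebesgue} itself without proof, citing \cite{Lebesgue}). However, the central step of your primary route is wrong as stated. You claim that because $\mathrm{ord}(\alpha)\le n-1$ the low order ``forces, at each $x$, the active summands to use up all $n$ coordinates and drive $F(x)$ to some face.'' This is false: if an interior point $x$ lies in a single $U_j\in\alpha$ (perfectly consistent with $\mathrm{ord}(\alpha)\le n-1$), then $\varphi_j(x)=1$, all other weights vanish, and $F(x)$ is whatever single push $U_j$ dictates, which is in general an interior point. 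The order hypothesis bounds the number of active summands at each $x$; it says nothing about where the convex combination lands, so $F$ is not a map into $\partial[0,1]^n$, and your retraction does not yet exist.

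The repair---precisely the one used in the paper's proof of Lemma~\ref{gen}---splits the argument into two separate ingredients. First, the low order is exploited differently: at each $x$ at most $n$ of the targets are active, so $g(x)$ lies in a convex set spanned by at most $n$ points, hence of dimension at most $n-1$; since $\alpha$ is finite, $g([0,1]^n)$ is a finite union of such sets and has empty interior. One then chooses an interior point $\bar x\notin g([0,1]^n)$ and postcomposes with the radial retraction $r\colon[0,1]^n\setminus\{\bar x\}\to\partial[0,1]^n$, so that $r\circ g$ is a self-map of the cube. Second---and this is where the ``no element meets two opposite faces'' hypothesis is actually consumed---one must verify that $g$ maps $\partial[0,1]^n$ into $\partial[0,1]^n$ and moves every boundary point; this is exactly what rules out fixed points of $r\circ g$ (any fixed point would have to lie on $\partial$, where $r$ is the identity, forcing $g(x)=x$), giving the contradiction with Brouwer. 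You correctly flag the ``boundary bookkeeping'' as the delicate point, but the missing idea is the dimension-plus-retraction decomposition: you do not need $g$ to land in the boundary, only to miss an interior point globally and to behave well on the boundary. Your Sperner/KKM alternative is a legitimate classical route, but as sketched it also omits the crucial content (the precise labeling rule derived from face-avoidance, and why a fully labeled simplex certifies a point in $n+1$ members of $\alpha$), so in its current form it is a plan rather than a proof.
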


In the following we generalize Lemma \ref{Lebesgue} from the setting of unit cubes to product of simplexes, which will also be very important in our proof of the Main Theorem.

\smallskip

Let us firstly introduce the product of simplexes as follows.

For each $k\in \N$, let $\Delta_k$ be the standard $k$-simplex (of dimension $k- 1$), that is,
$$\Delta_k = \{(x_i)_{1\le i\le k} \in [0, 1]^k: \sum_{i=1}^k x_i =1\}.$$
Let $1\le \ell\le k$. An {\it $\ell$-face} of the simplex $\Delta_k$ is
\begin{equation} \label{face}
\{ (x_i)_{1\le i\le k} \in \Delta_k: \sum_{i\in I} x_i =1\}
\end{equation}
 for some $I\subset \{1, \cdots, k\}$ satisfying $|I| =\ell$. Clearly, any $\ell$-face is affinely homeomorphic to the simplex $\Delta_\ell$. The {\it opposite face} $\bar{F}$ of a face $F$ (in the form of \eqref{face})
 is defined as
  $$\{(x_i)_{1\le i\le k} \in \Delta_k: \sum_{i\in I^c} x_i =1 \}\ \ \ \ \ \ \text{with}\ I^c=\{1, \cdots, k\} \setminus I.$$
  Note that the opposite face of a $(k-1)$-face $F$ is just the vertex of the simplex $\Delta_k$ which does not belong to $F$. We also notice that the intersection ${\bigcap_{1\le j\le m} F_j}$ of any $m$ distinct $(k-1)$-faces $F_1, \cdots, F_m$ (with $1\le m\le k- 1$) is a $(k-m)$-face.

The product of $n$ simplexes $\Delta_{k_1}, \cdots, \Delta_{k_n}$ is a polyhedron $\Delta$ given by:
$$
\Delta=\prod_{i=1}^{n}\Delta_{k_i}.
$$
Assume that $F$ is a face of $\Delta_{k_i}$ for some $1\le i\le n$. We let
$F_i$ be the face of $\Delta$ given by
$$F_i=\Delta_{k_1}\times \cdots \times \Delta_{k_{i-1}}\times F\times \Delta_{k_{i+1}}\times \cdots \times \Delta_{k_n}.$$ 
The {\it boundary} of $\Delta$, denoted by $\partial \Delta$, is defined as the union of all such $F_i$ where $F$ ranges over all $(k_i-1)$-faces of $\Delta_{k_i}$ for all $1\le i\le n$.
The {\it interior} of $\Delta$ is defined as $\Delta \setminus \partial\Delta$.

\smallskip

Now we introduce the notion of separation of faces for product of simplexes adapted from the one for the generalized cube which has been studied in \cite{bs2022}.

\begin{df}
A finite cover $\alpha$ of the product $\prod_{i=1}^{n}\Delta_{k_i}$ is said to be \emph{separating}, if
for any $i\in \{1,\cdots,n\}$, once $(U_j)_{j\in J}$ is a subfamily of $\alpha$ and $(F^j)_{j\in J}$ is a family of $(k_i-1)$-faces of $\Delta_{k_i}$ such that $(F^j)_i\cap U_j\neq \emptyset$ for each $j\in J$, then the intersection $\bigcap_{j\in J} U_j$ is disjoint from
the opposite face $\left(\overline{\bigcap_{j\in J} F^j}\right)_i$ whenever the subset $\bigcap_{j\in J} F^j$ is non-empty.
\end{df}

We shall need the following version of generalized Lebesgue's lemma for our setting.

\begin{lem} \label{gen}
Any separating open cover $\alpha$ of $\prod_{i=1}^{n}\Delta_{k_i}$ has order at least $\sum_{i=1}^{n} k_i$.
\end{lem}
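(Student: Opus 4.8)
The plan is to reduce Lemma~\ref{gen} to the classical Lebesgue lemma (Lemma~\ref{Lebesgue}) by transporting the cover from the polytope $\prod_{i=1}^{n}\Delta_{k_i}$ to a genuine cube of the right dimension. Since $\sum_{i=1}^n k_i=\sum_{i=1}^n(k_i-1)+n$ exceeds the topological dimension of $\prod_i\Delta_{k_i}$, the cube one maps in from must have the larger dimension $\sum_i k_i$ and the map must collapse $n$ independent directions; it is precisely the separating condition that should prevent a cover member from spreading across these collapsed directions. Concretely, for each factor I look for a continuous surjection $\pi_i\colon[0,1]^{k_i}\to\Delta_{k_i}$ which, for every $m\in\{1,\dots,k_i\}$, sends one facet of the cube into the $(k_i-1)$-face $G_m$ of $\Delta_{k_i}$ and the opposite facet into the opposite vertex $\overline{G_m}$; set $\pi=\prod_i\pi_i\colon Q=[0,1]^{\sum_i k_i}\to\prod_i\Delta_{k_i}$. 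For a separating open cover $\alpha$, the family $\tilde\alpha=\{\pi^{-1}(U):U\in\alpha\}$ is then a finite open cover of $Q$, and since $\pi$ is surjective one has $\mathrm{ord}(\tilde\alpha)=\mathrm{ord}(\alpha)$, because $\pi^{-1}(U_1)\cap\cdots\cap\pi^{-1}(U_r)=\pi^{-1}(U_1\cap\cdots\cap U_r)$ is nonempty precisely when $U_1\cap\cdots\cap U_r$ is.

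The heart of the reduction is that $\tilde\alpha$ has no member meeting two opposite faces of $Q$. If $\pi^{-1}(U)$ meets the facet of $Q$ that $\pi$ maps into $(G_m)_i$, then $U$ meets $(G_m)_i$; applying the separating hypothesis with the singleton family $J=\{\ast\}$ and $F^\ast=G_m$ forces $U$ to be disjoint from $(\overline{G_m})_i$, so $\pi^{-1}(U)$ misses the opposite facet. Granting this, Lemma~\ref{Lebesgue} applied to $\tilde\alpha$ on the $\big(\sum_i k_i\big)$-cube $Q$ yields $\mathrm{ord}(\alpha)=\mathrm{ord}(\tilde\alpha)\ge\sum_i k_i$, which is the claim.

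The step I expect to be the real obstacle is producing the surjections $\pi_i$ with exactly this boundary behaviour: a cube vertex lies on many facets at once, so the prescribed target values must be compatible along every intersection of facets of $[0,1]^{k_i}$, and this is where the part of the \emph{separating} condition quantified over families $(F^j)_{j\in J}$ with $|J|\ge 2$ — the part involving the opposite face $\big(\overline{\bigcap_{j\in J}F^j}\big)_i$ of an intersection of faces — has to be brought in, to guarantee that when several members of $\tilde\alpha$ simultaneously meet several faces of $Q$ their common intersection still avoids the corresponding opposite face. If a clean reduction to the cube turns out to be unavailable, the fallback is to rerun the proof of Lemma~\ref{Lebesgue} itself directly over $\prod_i\Delta_{k_i}$: pick a partition of unity subordinate to $\alpha$, form the associated barycentric map into a nerve of dimension $\mathrm{ord}(\alpha)$, and carry out the degree/essentiality argument, now using the $\sum_i k_i$ face/opposite-vertex pairs of the simplex factors in place of the $n$ pairs of opposite faces of a cube, with the separating condition ensuring that this map is essential.
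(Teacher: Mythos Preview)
Your primary reduction cannot be completed: no continuous $\pi_i:[0,1]^{k_i}\to\Delta_{k_i}$ with the stated boundary behaviour exists once $k_i\ge 2$. Write $G_m=\{x\in\Delta_{k_i}:x_m=0\}$, so $\overline{G_m}=e_m$, and suppose $\pi_i$ sends $\{x_m=0\}$ into $G_m$ and $\{x_m=1\}$ to $e_m$. Then the cube vertex $\mathbf{1}=(1,\dots,1)$ would have to map to every $e_m$ simultaneously, while $\mathbf{0}=(0,\dots,0)$ would have to land in $\bigcap_m G_m=\emptyset$. No reassignment of orientations helps: for any choice there is always a cube vertex lying only on ``vertex-type'' facets (forcing its image to be several distinct $e_m$) and another lying only on ``face-type'' facets (forcing its image into an empty intersection). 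The $|J|\ge 2$ clause of the separating condition cannot rescue this, since the obstruction is to the existence of $\pi$ itself and has nothing to do with the cover $\alpha$. There is also a dimension warning you should heed: $\sum_i k_i$ exceeds $\dim\prod_i\Delta_{k_i}=\sum_i(k_i-1)$ by $n$, so any surjection from the $(\sum_i k_i)$-cube must collapse $n$ independent directions, and it is precisely along those collapsed directions that the boundary constraints become incompatible at corners.

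Your fallback is essentially what the paper does, but all of the content lies in the details you omit. The paper picks a partition of unity $(f_U)_{U\in\alpha}$, assigns to each $U$ a point $\phi^\alpha(U)\in\prod_i\Delta_{k_i}$ whose $i$-th coordinate is the vertex $\overline{F}$ opposite some $(k_i-1)$-face $F$ with $U\cap F_i\neq\emptyset$ (and the barycenter of $\Delta_{k_i}$ if no such $F$ exists), and sets $g(x)=\sum_U \phi^\alpha(U)\,f_U(x)$. The full separating hypothesis --- over families $(F^j)_{j\in J}$ of arbitrary size, not just singletons --- is exactly what is used to prove that $g$ carries $\partial\bigl(\prod_i\Delta_{k_i}\bigr)$ into itself with no fixed point. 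Under the assumption $\mathrm{ord}(\alpha)<\sum_i k_i$ the image of $g$ lies in finitely many convex sets of deficient dimension and hence misses an interior point $\bar x$; composing $g$ with a retraction $\prod_i\Delta_{k_i}\setminus\{\bar x\}\to\partial\bigl(\prod_i\Delta_{k_i}\bigr)$ then yields a fixed-point-free self-map of the polytope, contradicting Brouwer. A generic ``barycentric map into the nerve'' does not deliver this boundary control; the specific placement $\phi^\alpha$ and the accompanying boundary analysis are the constructions you would have to supply.
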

\begin{proof}
Let $\alpha$ be a separating open cover of the product $\prod_{i=1}^{n}\Delta_{k_i}$. We assume the contrary that the cover $\alpha$
 has order strictly smaller than $\sum_{i=1}^{n} k_i$.
We take $(f_U)_{U\in \alpha}$ to be a partition of the unity associated to the cover $\alpha$, i.e. $f_U: \prod_{i=1}^{n}\Delta_{k_i}\rightarrow [0,1]$ is a continuous function supported on $U$ for any $U\in \alpha$ and $\sum_{U\in \alpha}f_U=1$.

\smallskip

We define a map
$\phi^\alpha=(\phi_1^\alpha,\cdots, \phi^\alpha_n):\alpha\rightarrow \prod_{i=1}^{n}\Delta_{k_i}$
 as follows: $\forall U\in \alpha$, $\forall i\in\{1,\cdots,n\}$,
\begin{equation} \label{constr}
\phi^\alpha_i(U) =
\begin{cases}
*, &\text{ if } U\cap F_i=\emptyset \text{ for any $(k_i-1)$-face $F$ of $\Delta_{k_i}$}, \\
&\\
\overline{F}
, &\text{ if } U\cap F_i\neq\emptyset \text{ for some $(k_i-1)$-face $F$ of $\Delta_{k_i}$},
\end{cases}
\end{equation}
where $*$ denotes the center of the simplex $\Delta_{k_i}$, i.e. the point with coordinates $(\frac{1}{k_i}, \cdots, \frac{1}{k_i})$.
Though $\phi^\alpha$ is not uniquely defined in this way as some element $U$ of the cover $\alpha$ may intersect $F_i$ for different faces $F$, this will not affect the proof if we choose arbitrarily one such face $F$ and define it in this way, as shown by following discussions.

Now we define a continuous map $g:\prod_{i=1}^{n}\Delta_{k_i} \to \prod_{i=1}^{n}\Delta_{k_i}$ as
\begin{equation} \label{def}
g(x)=\sum_{U\in \alpha} \phi^\alpha(U)f_U(x),\ \ \ \ \ \ \forall
  x\in \prod_{i=1}^{n}\Delta_{k_i}.
\end{equation}

\smallskip

\noindent \textbf{Claim.} Fix any $x\in \partial (\prod_{i=1}^{n}\Delta_{k_i})$. We have
\begin{equation} \label{claim}
g (x)\in \partial (\prod_{i=1}^{n}\Delta_{k_i}) \setminus \{x\}.
\end{equation}

\smallskip

\begin{proof}[Proof of Claim]
We prove firstly that $g (x)$ belongs to the boundary $\partial (\prod_{i=1}^{n}\Delta_{k_i})$. In fact, by the definition of the boundary, there exists $i\in \{1, \cdots, n\}$, such that $x$ belongs to $F_i$ for some $(k_i-1)$-face $F$ of $\Delta_{k_i}$.
    Thus, once the element $U$ of $\alpha$ satisfies $f_U (x)> 0$ (in particular, $x\in U$), one has $U\cap F_i\neq \emptyset$ and then $\phi_i^\alpha(U)$ will be a vertex of the simplex $\Delta_{k_i}$ by the construction \eqref{constr}. Now we assume the contrary that $g (x)$ does not belong to $\partial (\prod_{i=1}^{n}\Delta_{k_i})$. Thus by the definition \eqref{def},
    each vertex of $\Delta_{k_i}$ will be a $\phi_i^\alpha$-image of $U$ for some $U\in \alpha$. We list all $(k_i- 1)$-faces of $\Delta_{k_i}$ as $\{F^1, \cdots, F^{k_i}\}$. By \eqref{constr} and \eqref{def}, for each $j= 1, \cdots, k_i$ there exists $U_j\in \alpha$ satisfying $f_{U_j} (x) >0 $ and $\phi_i^\alpha (U_j)= \overline{F^j}$ (and hence $x\in U_j$ and $U_j\cap F^j\neq \emptyset$). Since the cover $\alpha$ is separating, for each $\ell = 1, \cdots, k_i$, the intersection $\bigcap_{j\neq \ell} U_j$ (containing $x$) is disjoint from
the opposite face $\left(\overline{\bigcap_{j\neq \ell} F^j}\right)_i$, in particular, 
$$x\notin \left(\overline{\bigcap_{j\neq \ell} F^j}\right)_i = (F^\ell)_i\ \ \ \ \ \ (\text{by the construction}),$$
which contradicts to the assumption of $x\in \partial (\prod_{i=1}^{n}\Delta_{k_i})$. Thus one has $g (x)\in \partial (\prod_{i=1}^{n}\Delta_{k_i})$.  
    
    \smallskip

Now it suffices to show $g (x)\neq x$. We have shown as above $g(x)\in \partial (\prod_{i=1}^{n}\Delta_{k_i})$.
Thus, there exists $j\in \{1,\cdots, n\}$ such that $g (x)$ belongs to $M_j$ for some face $M$ of $\Delta_{k_j}$, and we fix arbitrarily such $j$.
Again by the construction \eqref{constr}, once the element $U$ of $\alpha$ satisfies $f_U (x)> 0$ (particularly, $x\in U$), then there exists
 a $(k_j-1)$-face $F$ satisfying $\phi^\alpha_j(U) = \overline{F}$ and $U\cap F_j\neq \emptyset$.
Denote by $\mathcal{F}$ the set of all such $(k_j-1)$-faces $F$.

It is not hard to show that the intersection $\bigcap_{F\in \mathcal{F}} F$ is non-empty.
 In fact, notice that the intersection ${\bigcap_{1\le s\le m} F_{(s)}}$ of any $m$ distinct $(k_j-1)$-faces $F_{(1)}, \cdots, F_{(m)}$ (with $1\le m\le k_j- 1$) is a $(k_j-m)$-face. We assume the contrary that $\bigcap_{F\in \mathcal{F}} F = \emptyset$. Then, for each $(k_j-1)$-face $G$ (of the simplex $\Delta_{k_j}$), there exists $U_G\in \alpha$ such that $f_{U_G} (x)> 0$ and $\phi^\alpha_j(U_G) = \overline{G}$. And so by the construction \eqref{def},
the coordinate of $g (x)$ (corresponding to the simplex $\Delta_{k_j}$) belongs to the interior $\Delta_{k_j}\setminus \partial (\Delta_{k_j})$, which contradicts to the selection of $j$ such that $g (x)$ belongs to $M_j$ for some face $M$ of $\Delta_{k_j}$. That is, the intersection $\bigcap_{F\in \mathcal{F}} F$ is non-empty.

Thus we conclude $g (x)\neq x$ by the separating property of the cover $\alpha$, as the image $g(x)$ belongs to the face $\left(\overline{\bigcap_{F\in \mathcal{F}} F}\right)_j$ by applying again the construction \eqref{def} of the map $g$.
\end{proof}

Note that we have assumed $\text{\rm ord}(\alpha)< \sum_{i=1}^{n} k_i$ and the function $f_U$ is supported on $U$ for any $U\in \alpha$. Then, for each $x\in X$, we have
 $|\mathcal{U}_x|\le \sum_{i=1}^{n} k_i$ where we set $\mathcal{U}_x= \{U\in \alpha: f_U (x)> 0\}$, and so the image $g (x)$ belongs to the convex subset $\Delta_x\subset \prod_{i=1}^{n}\Delta_{k_i}$ generated by the finite subset $\{\phi^\alpha (U): U\in \mathcal{U}_x\}\subset \prod_{i=1}^{n}\Delta_{k_i}$.
 We remark that the convex subset $\Delta_x$ has dimension at most $(\sum_{i=1}^{n} k_i-1)$ for each $x\in X$, and that $\alpha$ is a finite cover,
thus $g (\prod_{i=1}^{n}\Delta_{k_i})$ is contained in the union of finitely many $(\sum_{i=1}^{n} k_i-1)$-dimensional convex subsets of $\prod_{i=1}^{n}\Delta_{k_i}$ and hence has dimension at most $(\sum_{i=1}^{n} k_i-1)$. In particular, we may choose
$\overline{x}$ from the interior of $\prod_{i=1}^{n}\Delta_{k_i}$ satisfying $\overline{x}\notin g (\prod_{i=1}^{n}\Delta_{k_i})$.
Let
 $$r: \prod_{i=1}^{n}\Delta_{k_i}\setminus \{\overline{x}\}\rightarrow \partial(\prod_{i=1}^{n}\Delta_{k_i})$$
  be any retraction map on the boundary $\partial(\prod_{i=1}^{n}\Delta_{k_i})$, that is, $r$ is a continuous map leaving each point of $\partial(\prod_{i=1}^{n}\Delta_{k_i})$ fixed.
Finally we consider the following continuous map
 $$r\circ g :\prod_{i=1}^{n}\Delta_{k_i}\to \prod_{i=1}^{n}\Delta_{k_i},$$
which has also no fixed point: clearly any fixed point $x$ of the map $r\circ g$ belongs to $\partial (\prod_{i=1}^{n}\Delta_{k_i})$, and then $g (x)$ belongs also to the boundary $\partial (\prod_{i=1}^{n}\Delta_{k_i})$ by \eqref{claim}, thus $x= r\circ g (x)$ is equal to $g (x)$ by the definition of the retraction map $r$, which contracts to \eqref{claim}. However, this contradicts to the well-known Brower's fixed point theorem.
Thus the cover $\alpha$
 has order at least $\sum_{i=1}^{n} k_i$. This finishes the proof.
\end{proof}

\section{Proof of Main Theorem} \label{main}

In this section, we present the proof of our Main Theorem following \cite{bs2022}, based on the tiling property Proposition \ref{tiling thm} of countable amenable groups developed in \cite{DHZ}. We remark that the approach still works if we use the quasi-tiling theory of countable amenable groups set up by Ornstein-Weiss in \cite{OW}, however the arguments seem to be a little more complicated than that when we used the tiling property.

\smallskip

In fact, we only need consider the case that $(X, \Gamma)$ has positive topological entropy, since we have seen from \eqref{eq11} that, if $(X, \Gamma)$ has zero topological entropy then $(\MM (X), \Gamma)$ has also zero topological entropy and so zero mean topological dimension. Thus, from now on we assume that $(X, \Gamma)$ has positive topological entropy.

\smallskip

To estimate the mean topological dimension $\mdim(\MM (X), \Gamma)$ we use the 1-Wasserstein distance $W$ on $\MM(X)$ (which is compatible with its weak-$*$ topology). The Kantorovich-Rubinstein dual representation of $W$ is given by
\begin{equation} \label{repre}
W(\mu,\nu)=\sup_{f\in \mathcal{F}}\int f\,(d\mu-d\nu),\ \ \ \ \ \ \forall \mu,\nu\in \MM(X),
\end{equation}
where $\mathcal{F}$ denotes the family of all $1$-Lipschitz continuous functions $f:X\rightarrow \mathbb R$. From this representation we get easily
$\diam_W(\MM(X))\leq \diam_d(X)$.
For each $F\in \mathfrak{F}_\Gamma$, we introduce the dynamical $1$-Wasserstein
distance $W_F$ for $(\MM (X), \Gamma)$ defined as
\begin{equation} \label{dyn}
W_F(\mu, \nu)=\max \{W(g\mu, g\nu): g\in F\},\ \ \ \ \ \ \forall \mu, \nu\in \MM(X).
\end{equation}

By Proposition \ref{tiling thm}, we let $\{F_n: n\in\N\}$ be a F\o lner sequence of the countable amenable group $\Gamma$ (with each element containing $e$), which breaks into countably many portions $\{F_{n_{k -1}+1},F_{n_{k -1}+2},\cdots,F_{n_k}\}$ for each $k\in \N$ (with $n_0 = 0$), and finite subsets $C_{k, n}, k< n$, possibly empty, such that
$\{F_{n_\ell + 1}: \ell\in\N\}$ is a tempered F\o lner sequence, and that
every set $F_n$ with $n>n_{\ell+1}$ is a disjoint union of shifted sets from the $(\ell+ 1)$-th portion:
\begin{equation} \label{tiling}
F_n=\bigsqcup_{k=n_\ell+1}^{n_{\ell+1}}\ \bigsqcup_{c\in C_{k,n}}F_k c.
\end{equation}

Note that we have assumed that $(X, \Gamma)$ has positive topological entropy, applying Lemma \ref{keli} to the constructed tempered F\o lner sequence $\{F_{n_\ell + 1}: \ell\in\N\}$ (as above), we may choose non-empty open subsets $U_0, U_1$ of $X$ with disjoint closures, $\delta>0$ and for each $n= n_k + 1$ (with $k\in \N$) a set $J_n\subset F_n$ with $|J_n|>\delta |F_n|$, such that
$$
\bigcap_{g\in J_n}g^{-1} U_{\zeta(g)}\neq \emptyset\ \ \ \ \ \ \text{for any}\ \zeta: J_n\rightarrow \{0,1\}.
$$

Now we fix arbitrarily $i\in \N$ and $n= n_k + 1$ with $k \ge i+1$.
By the selection of above $U_0, U_1$ and $\delta, J_n$, we may pick, for each $\mathbf{i}=(\mathbf{i}_g)_{g\in J_n}\in \{0,1\}^{J_n}$, a point $x_{\mathbf{i}}$ such that
\begin{equation*} \label{random}
x_{\mathbf{i}}\in\displaystyle{\bigcap_{ g\in J_n} g^{-1}U_{\mathbf{i}_g}}.
\end{equation*}
We let $K_n \subset \MM(X)$ be the simplex given by the convex hull of all points $\delta_{x_{\mathbf i}}$ with $\mathbf{i}$ ranging over the whole $\{0,1\}^{J_n}$, where for each $x\in X$ we denote by $\delta_x$ the Dirac measure of the point $x$. The simplex $K_n$ is in fact a $2^{|J_n|}$-simplex from the above construction.
For each $j= n_i +1, \cdots, n_{i+1}$ we also set
\begin{equation} \label{sub-set}
C_j^{(n)}=\{c\in C_{j, n}: |J_n\cap F_j c|\ge \frac{\delta}{2} |F_j|\}.
\end{equation}

\subsection{Multi-affine embedding of $\prod_{i=1}^{p}\Delta_{k_i}$ into $\mathcal{M} (X)$}
Note that there is a natural embedding of the product $\prod_{i=1}^{p}\Delta_{k_i}$ into the simplex $\Delta_{\prod_{i=1}^{p} k_i}$ via
\begin{equation*}
\Theta: \prod_{i=1}^{p}\Delta_{k_i} \to \Delta_{\prod_{i=1}^{p} k_i}, \ \ \ \ \ \
(t_m)_{m\in \{1, \cdots, p\}} \mapsto \sum_{\mathbf i= (\mathbf{i}_m)_{m\in \{1, \cdots, p\}} }t_{\mathbf i}{e_{\mathbf i}} ,
\end{equation*}
where the following family is a standard basis of the linear space $\R^{\prod_{i=1}^{p} k_i}$
 $$\{e_{\mathbf i}: \mathbf i = (\mathbf{i}_m)_{m\in \{1, \cdots, p\}}\ \text{with}\ \mathbf{i}_m\in \{1, \cdots, k_m\}\ \text{for each}\ m\in \{1, \cdots, p\}\},$$ $t_m=(t_{m,j})_{j\in \{1, \cdots, k_m \}}\in \Delta_{k_m}\ \text{for each}\ m\in \{1, \cdots, p\}$, and set
$$t_{\mathbf i} = \prod_{m\in\{1, \cdots, p\}}t_{m,{\mathbf{i}_m}}\ \text{for each}\ \mathbf i = (\mathbf{i}_m)_{m\in \{1, \cdots, p\}}.$$
 The map $\Theta$ is clearly continuous, injective, but not necessarily affine. However, it is {\it multi-affine}, i.e. it is affine respectively with respect to each variable $t_m$ for any $m\in \{1, \cdots, p\}$. In particular, the image $S$ of $\prod_{i=1}^{p}\Delta_{k_i}$ under $\Theta$ is not necessarily convex, but its faces are still joined by segment lines inside $S$ (the image of a face is also called a \emph{face} of the image).

 \smallskip

We consider the following affine homeomorphism defined by
\begin{equation*}
\Psi: \Delta_{\prod_{j=n_i+1}^{n_{i+1}} \prod_{c\in C_{j, n}}2^{|J_n\cap F_j c|}} \to K_n,\ \ \ \  \ \
(t_\mathbf i)_{\mathbf i=(\mathbf{i}_g)_{g\in J_n}} \mapsto \sum_{\mathbf i =(\mathbf{i}_g)_{g\in J_n}}t_{\mathbf i}\delta_{x_{\mathbf i}},
\end{equation*}
here $\mathbf i =(\mathbf{i}_g)_{g\in J_n}$ ranges over the whole $\{0,1\}^{J_n}$. We remark from the identity \eqref{tiling} that
\begin{equation} \label{tiling-J}
J_n=\bigsqcup_{j=n_i+1}^{n_{i+1}}\ \bigsqcup_{c\in C_{j,n}} (J_n\cap F_j c).
\end{equation}

\smallskip

Now applying the multi-affine embedding
$$\Theta: \prod_{j=n_i+1}^{n_{i+1}} \prod_{c\in C_{j,n}} \Delta_{2^{|J_n\cap F_j c|}}\to \Delta_{2^{|J_n|}}=\Delta_{\prod_{j=n_i+1}^{n_{i+1}} \prod_{c\in C_{j, n}}2^{|J_n\cap F_j c|}},$$
we define the following multi-affine embedding
$$\Xi=\Psi\circ \Theta: \prod_{j=n_i+1}^{n_{i+1}} \prod_{c\in C_{j,n}} \Delta_{2^{|J_n\cap F_j c|}}\to K_n.$$
We denote the image of $\Xi$ by $L_n$. For the sake of simplicity, we rewrite
$$
 \prod_{j=n_i+1}^{n_{i+1}} \prod_{c\in C_{j,n}} \Delta_{2^{|J_n\cap F_j c|}} = \prod_{m\in M} \Delta_{k_m},
$$
where $M$ is an index set indexed by $m (j, c)$ for all $j= n_i+ 1, \cdots, n_{i+ 1}$ and any $c\in C_{j,n}$, and $k_m= 2^{|J_n\cap F_j c|}$ for each $m=m(j,c)$ with $n_i+ 1\le j\le n_{i+ 1}$ and $c\in C_{j,n}$.

\smallskip

For any $E\subset \MM(X)$, set $S_E=\bigcup_{\mu\in E} \supp(\mu)$. Thus $S_{K_n}$ consists of finitely many points $x_{\mathbf i}$ for all $\mathbf i\in \{0,1\}^{J_n}$.
 We have the following decomposition of the measure in $L_n$.

\begin{lem} \label{decomposition}
Let $m\in M$, $1\le \ell\le k_m-1$ and $A$ an $\ell$-face of $\Delta_{k_m}$, $\overline{A}$ its opposite $(k_m-\ell)$-face. Then for any  $\mu\in L_n$, there exists $\mu'\in \Xi(A_m)$ and $\mu''\in \Xi(\overline{A}_m)$ such that
$$\mu=\lambda  \mu'+(1-\lambda)\mu'',$$
where $\mu(S_{\Xi(A_m)})=\lambda \in [0,1]$ and $\mu(S_{\Xi(\overline{A}_m)})=1-\lambda$.
\end{lem}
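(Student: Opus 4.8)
\textbf{Proof proposal for Lemma \ref{decomposition}.}

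The plan is to exploit the multi-affine structure of $\Xi$ coordinate-wise, treating the variable attached to the index $m$ separately. Write $\prod_{m'\in M}\Delta_{k_{m'}}$ as $\Delta_{k_m}\times \prod_{m'\neq m}\Delta_{k_{m'}}$, so that a point of the domain is a pair $(s,t)$ with $s=(s_1,\dots,s_{k_m})\in\Delta_{k_m}$ and $t$ the tuple of remaining simplex-coordinates. Given $\mu\in L_n$, pick any preimage $(s,t)$ with $\Xi(s,t)=\mu$. Since $A$ is the $\ell$-face $\{s: \sum_{i\in I}s_i=1\}$ for some $I\subset\{1,\dots,k_m\}$ with $|I|=\ell$, decompose $s$ as $s=\lambda s' + (1-\lambda)s''$ where $\lambda=\sum_{i\in I}s_i$, the point $s'\in\Delta_{k_m}$ is supported on the coordinates in $I$ (normalising the $s_i$, $i\in I$), and $s''$ is supported on $I^c$; when $\lambda=0$ or $1$ one of the two summands is chosen arbitrarily in the respective face. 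Then $s'$ lies in $A$ and $s''$ in $\overline A$, and one sets $\mu'=\Xi(s',t)\in\Xi(A_m)$, $\mu''=\Xi(s'',t)\in\Xi(\overline A_m)$.

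The key step is then to invoke affineness of $\Xi$ \emph{in the $m$-th variable only}: because $\Xi=\Psi\circ\Theta$ and $\Theta$ is affine in each $t_{m'}$ separately while $\Psi$ is genuinely affine, the map $s\mapsto\Xi(s,t)$ is affine for fixed $t$. Hence
$$\mu=\Xi(s,t)=\Xi(\lambda s'+(1-\lambda)s'',\,t)=\lambda\,\Xi(s',t)+(1-\lambda)\,\Xi(s'',t)=\lambda\mu'+(1-\lambda)\mu''.$$
It remains to identify $\lambda$ with the mass $\mu(S_{\Xi(A_m)})$. For this I would observe that the supports $S_{\Xi(A_m)}$ and $S_{\Xi(\overline A_m)}$ are disjoint finite subsets of $S_{K_n}=\{x_{\mathbf i}:\mathbf i\in\{0,1\}^{J_n}\}$: the face $A_m$ forces the block of $\{0,1\}$-labels indexed by the coordinate-set complementary to $I$ (inside the copy of $\{0,1\}^{J_n\cap F_jc}$ corresponding to $m$) to take a fixed value, and $\overline A_m$ forces the opposite value, so no label $\mathbf i$ can be compatible with both. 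Since $\mu''$ is supported in $S_{\Xi(\overline A_m)}$, it gives zero mass to $S_{\Xi(A_m)}$; since $\mu'$ is a probability measure supported in $S_{\Xi(A_m)}$, we get $\mu(S_{\Xi(A_m)})=\lambda$, and symmetrically $\mu(S_{\Xi(\overline A_m)})=1-\lambda$. This also shows the decomposition is forced, so the particular choice of preimage $(s,t)$ is immaterial.

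The main obstacle I anticipate is the bookkeeping in identifying $\lambda$ with the measure-theoretic quantity: one must track carefully how a face of the factor $\Delta_{k_m}=\Delta_{2^{|J_n\cap F_jc|}}$ translates, under $\Theta$ and then $\Psi$, into a constraint on which Dirac masses $\delta_{x_{\mathbf i}}$ can appear, and in particular verify the disjointness of $S_{\Xi(A_m)}$ and $S_{\Xi(\overline A_m)}$ together with the fact that $\Xi(A_m)$ consists exactly of the measures in $L_n$ supported on $S_{\Xi(A_m)}$. Everything else — the splitting of $s$ and the use of partial affineness — is routine once the indexing conventions set up before the lemma are unwound. A minor point to handle cleanly is the degenerate case $\lambda\in\{0,1\}$, where the lemma still asserts existence of both $\mu'$ and $\mu''$, so one simply takes an arbitrary point of the relevant $\Xi$-image for the vanishing summand.
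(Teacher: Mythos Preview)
Your proposal is correct and follows essentially the same route as the paper: pull $\mu$ back to a point of the product, split the $m$-th simplex coordinate along the convex decomposition $\Delta_{k_m}=\conv(A,\overline A)$, push forward by $\Xi$ using affineness in that single coordinate, and read off $\lambda$ from the disjointness $S_{\Xi(A_m)}\cap S_{\Xi(\overline A_m)}=\emptyset$. The paper's write-up is terser (it just asserts the disjointness ``from the construction'' and does not separate out the degenerate cases), but the argument is the same.

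One small correction to your bookkeeping sketch for the disjointness: the vertices of the factor $\Delta_{k_m}=\Delta_{2^{|J_n\cap F_jc|}}$ are indexed by the \emph{elements} of $\{0,1\}^{J_n\cap F_jc}$, and an $\ell$-face $A$ corresponds to a subset $I$ of that vertex set with $|I|=\ell$. Thus $S_{\Xi(A_m)}=\{x_{\mathbf i}:\mathbf i|_{J_n\cap F_jc}\in I\}$ and $S_{\Xi(\overline A_m)}=\{x_{\mathbf i}:\mathbf i|_{J_n\cap F_jc}\in I^c\}$, which are disjoint because $I$ and $I^c$ are. It is not that a block of labels is ``forced to take a fixed value''; rather, the restriction of $\mathbf i$ to $J_n\cap F_jc$ is constrained to lie in a prescribed subset of $\{0,1\}^{J_n\cap F_jc}$. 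With that adjustment your identification of $\lambda$ goes through cleanly.
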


\begin{proof}
We write $\mu=\Xi(t)$ for some $t\in \prod_{j\in M} \Delta_{k_j}$.
As the simplex $\Delta_{k_m}$ is the convex hull of $A$ and its opposite face $\overline{A}$, the product $\prod_{j\in M} \Delta_{k_j}$ is the convex hull of $A_m$ and $\overline{A}_m$.
Then $t=\lambda t'+(1-\lambda)t''$ for some $\lambda\in [0,1]$ and $t'\in A_m, t''\in \overline{A}_m$. Applying the affinity of $\Xi$ over the $m$-th coordinate, one has $\mu=\lambda  \Xi(t')+(1-\lambda)\Xi(t'')$.
Note
$S_{\Xi (A_m)}\cap S_{\Xi(\overline{A}_m)} = \emptyset$ from the construction, we conclude $\mu(S_{\Xi(A_m)})=\lambda$ and $\mu(S_{\Xi(\overline{A}_m)})=1-\lambda$.
\end{proof}

\subsection{Analysis of distance $W_{F_n}$ on $\mathcal{M} (X)$}

From now on we identify $L_n$ with the  product $\prod_{j=n_i+1}^{n_{i+1}} \prod_{c\in C_{j,n}} \Delta_{2^{|J_n\cap F_j c|}}$ via the multi-affine embedding $\Xi$. In particular, by the faces of $L_n$ we mean the images by $\Xi$ of the faces of $\prod_{j=n_i+1}^{n_{i+1}} \prod_{c\in C_{j,n}} \Delta_{2^{|J_n\cap F_j c|}}$.

\smallskip

 By continuity of the action of $\Gamma$ over the space $X$, we can choose $\gamma_i>0$ such that
\begin{equation} \label{dist}
d(x,y)<\gamma_i\ \Longrightarrow\ \max \{d(g x, g y): g\in F_j, j= n_{i}+1, \cdots, n_{i+1}\} < d (U_0,U_1).
\end{equation}

\begin{lem} \label{tech}
Let $\mu\in L_n$ and $A$ be a face of $\Delta_{k_m}$ with $m= m(j,c)\in M$. Then
$$\mu(S_{\overline{A}_m})\cdot \diam_d(X)\ge W_{F_n}(\mu, A_m)\geq \mu(S_{\overline{A}_m})\cdot \gamma_i,\ \text{in particular,}\ W_{F_n}\left(\overline{A}_m, A_m\right)
\geq \gamma_i.$$
\end{lem}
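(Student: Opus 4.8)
The plan is to exploit the decomposition of Lemma \ref{decomposition}: writing $\mu = \lambda\mu' + (1-\lambda)\mu''$ with $\mu' \in \Xi(A_m)$, $\mu'' \in \Xi(\overline{A}_m)$, and $\lambda = \mu(S_{\Xi(A_m)}) = 1 - \mu(S_{\overline{A}_m})$. Thus $1-\lambda = \mu(S_{\overline{A}_m})$ is exactly the "mass of $\mu$ that lives on the opposite face", and the two bounds will come from transporting this mass. For the upper bound, I would observe that $\mu$ and $\mu'$ agree on the part of mass $\lambda$ coming from $\Xi(A_m)$, so any coupling that moves only the remaining mass $1-\lambda$ has cost at most $(1-\lambda)\diam_d(X)$; applying the push-forward $g$ (which does not change total masses) and the estimate $\diam_W(\MM(X)) \le \diam_d(X)$ for each $g \in F_n$ gives $W_{F_n}(\mu, A_m) \le W_{F_n}(\mu,\mu') \le \mu(S_{\overline{A}_m})\cdot\diam_d(X)$, since $\mu' \in A_m$.

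For the lower bound, the key point is that the supports $S_{\Xi(A_m)}$ and $S_{\Xi(\overline{A}_m)}$ are "separated by the dynamics" in the following sense. Recall $m = m(j,c)$ corresponds to the tile $F_j c$, and the index coordinates running over $J_n \cap F_j c$; a face $A$ of $\Delta_{k_m} = \Delta_{2^{|J_n \cap F_j c|}}$ fixes some of the $\{0,1\}$-labels on $J_n \cap F_j c$, while the opposite face $\overline{A}$ fixes the complementary labels to the opposite values. Concretely, there is some $g_0 \in J_n \cap F_j c \subset F_j c$ (hence $g_0 = h c$ for some $h \in F_j$) such that every point $x_{\mathbf i}$ with $\delta_{x_{\mathbf i}}$ appearing in a measure of $\Xi(A_m)$ satisfies $g_0 x_{\mathbf i} \in U_{0}$, while every $x_{\mathbf i}$ appearing in $\Xi(\overline{A}_m)$ satisfies $g_0 x_{\mathbf i} \in U_{1}$ (or vice versa), by the choice of the points $x_{\mathbf i}$ as elements of $\bigcap_{g \in J_n} g^{-1} U_{\mathbf i_g}$. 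Therefore $(g_0)_*\mu'$ is supported in $\overline{U_0}$ and $(g_0)_*\mu''$ in $\overline{U_1}$; since $A_m$ consists of measures of the form $\mu'$, for any $\nu \in A_m$ we get, testing against a suitably scaled Lipschitz function separating $U_0$ from $U_1$ in the Kantorovich--Rubinstein formula \eqref{repre}, that $W(g_0\mu, g_0\nu) \ge \mu(S_{\overline{A}_m}) \cdot d(U_0, U_1)$. Actually $\gamma_i \le d(U_0,U_1)$ by \eqref{dist}, so one obtains the slightly weaker but sufficient bound; more carefully, I would use \eqref{dist} to say that after moving the $(1-\lambda)$-mass of $g_0\mu$ onto $\overline{U_1}$ one still cannot return it to $\overline{U_0}$ within cost $\gamma_i$ per unit mass, giving $W_{F_n}(\mu, A_m) \ge W(g_0\mu, g_0\nu) \ge \mu(S_{\overline{A}_m})\cdot\gamma_i$. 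The final assertion $W_{F_n}(\overline{A}_m, A_m) \ge \gamma_i$ is the special case $\mu \in \overline{A}_m$, for which $\mu(S_{\overline{A}_m}) = 1$.

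The main obstacle I anticipate is pinning down the precise "separating element" $g_0$ and verifying that it genuinely separates the supports $S_{\Xi(A_m)}$ and $S_{\Xi(\overline{A}_m)}$ after applying $g_0$ — i.e., carefully unwinding the definitions of $\Theta$, $\Psi$, $\Xi$ and the indexing set $M$ to see that a face $A$ of the factor $\Delta_{2^{|J_n \cap F_j c|}}$ corresponds to freezing some subset of the binary coordinates indexed by $J_n \cap F_j c$, so that on $A$ some coordinate $g_0$ is frozen to $0$ while on $\overline{A}$ it is frozen to $1$. Once this bookkeeping is done, the rest is the routine Kantorovich--Rubinstein estimate combined with $\diam_W(\MM(X)) \le \diam_d(X)$ and the continuity estimate \eqref{dist}.
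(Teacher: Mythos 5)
Your upper bound is fine and matches the paper's argument (decompose $\mu$ via Lemma \ref{decomposition} and use $\diam_W(\MM(X))\le\diam_d(X)$ coordinatewise over $F_n$). The problem is in the lower bound, where your key structural claim is false: you assert that a face $A$ of $\Delta_{k_m}$ "freezes some of the $\{0,1\}$-labels on $J_n\cap F_jc$", so that a single element $g_0\in J_n\cap F_jc$ sends all of $S_{\Xi(A_m)}$ into $U_0$ and all of $S_{\Xi(\overline{A}_m)}$ into $U_1$. But $\Delta_{k_m}$ is the full simplex on all $k_m=2^{|J_n\cap F_jc|}$ labelings, and a face in the sense of \eqref{face} is an \emph{arbitrary} subset $I$ of these vertices, with $\overline{A}$ the complementary subset; such a subset is in general not of the form "coordinate $g_0$ equals $0$". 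Already for $|J_n\cap F_jc|=2$ and $A$ the face spanned by the labelings $\{00,11\}$ (with $\overline{A}$ spanned by $\{01,10\}$) no separating $g_0$ exists, and the same failure occurs for the $(k_m-1)$-faces (omitting a single vertex $v$: the remaining labelings disagree with $v$ at varying coordinates), which is exactly the case the lemma is later applied to in Lemmas \ref{inter} and \ref{coversep}. So your single-element separation of the supports, and hence the estimate $W(g_0\mu,g_0\nu)\ge\mu(S_{\overline{A}_m})\cdot d(U_0,U_1)$, does not go through.

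The paper's proof fixes this by a pairwise argument that explains why $\gamma_i$ and \eqref{dist} are needed at all: for each pair $x_{\mathbf i}\in S_{\overline{A}_m}$, $x_{\mathbf i'}\in S_{A_m}$, the restrictions of $\mathbf i,\mathbf i'$ to $J_n\cap F_jc$ are distinct, so there is a \emph{pair-dependent} $g\in J_n\cap F_jc$ with $gx_{\mathbf i}$ and $gx_{\mathbf i'}$ in different sets $U_0,U_1$, hence $d(gx_{\mathbf i},gx_{\mathbf i'})\ge d(U_0,U_1)$; writing $g=hc$ with $h\in F_j$ and using the contrapositive of \eqref{dist} gives $d(cx_{\mathbf i},cx_{\mathbf i'})\ge\gamma_i$, uniformly over all pairs. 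Thus the single element $c$ (which lies in $F_n$ since $e\in F_j$ and the tiling \eqref{tiling}) satisfies $d(cS_{\overline{A}_m},cS_{A_m})\ge\gamma_i$, and one concludes with the support-separation inequality $W(c\mu,c\nu)\ge (c\mu)(cS_{\overline{A}_m})\cdot d(cS_{\overline{A}_m},cS_{A_m})$ from \cite[Lemma 5]{bs2022}. If you replace your $g_0$-separation step by this "pull back from $hc$ to $c$ via \eqref{dist}" argument, the rest of your outline works.
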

\begin{proof}
Set $\beta=\mu\left(S_{\overline{A}_m}\right)$.
By Lemma \ref{decomposition} we may write
$\mu=\beta\mu'+(1-\beta)\mu''$ for some
$\mu'\in \overline{A}_m$ and $\mu''\in A_m$.
By the Kantorovich-Rubinstein dual representation \eqref{repre} of $W$ and the definition \eqref{dyn} of the dynamical $1$-Wasserstein
distance $W_{F_n}$ for $(\MM (X), \Gamma)$, we have
\begin{eqnarray*}
W_{F_n}(\mu,A_m ) & \leq & W_{F_n}(\mu, \mu'') = \max_{g\in F_n} W (g \mu, g \mu'')\ (\text{using \eqref{dyn}}) \\
& = &
 \max_{g\in F_n} W (\beta\cdot g \mu' + (1 - \beta)\cdot g \mu'', g \mu'') \\
 & = &  \max_{g\in F_n} W (\beta\cdot g \mu', \beta\cdot g \mu'') = \max_{g\in F_n} \beta \cdot W (g \mu', g \mu'')\ (\text{both using \eqref{repre}}) \\
 & \le & \beta \cdot \diam_d(X)\ (\text{noting that $\diam_W(\MM(X))\leq \diam_d(X)$}).
\end{eqnarray*}

In the following let us estimate $W_{F_n}(\mu, A_m)$ from below.

\smallskip

Assume firstly that $x_\mathbf{i}$ and $x_{\mathbf i'}$ belong respectively to $S_{\overline{A}_m}$ and $ S_{A_m}$  for different $\mathbf i, \mathbf i'\in \{0,1\}^{J_n}$, in particular, there exists $g\in F_j c$ such that $g x_{\mathbf i}\in U_\ell$ and $g x_{\mathbf i'}\in U_{\ell'}$ for different $\ell, \ell'\in \{0,1\}$, and so $d(g x_{\mathbf i}, g x_{\mathbf i'})\geq d (U_0,U_1)$. We remark that $n_{i}+1\le j\le n_{i+1}$, by the definition \eqref{dist} of $\gamma_i$, one has $d(c x_{\mathbf i}, c x_{\mathbf i'})\geq \gamma_i$.
Note that $S_{A_m}\cap S_{\overline{A}_m} = \emptyset$ and that
$S_{K_n} = \{x_{\mathbf i}: \mathbf i\in \{0,1\}^{J_n}\}$, we conclude from the previous arguments that $
d(c S_{\overline{A}_m}, c S_{A_m})\ge \gamma_i$.

It was proved in \cite[Lemma 5]{bs2022} that $W(\mu, \nu)\geq \mu(S\setminus S')\cdot d(S\setminus S',S')$ once $\mu$ and $\nu$ are two elements in $\MM(X)$ supported respectively on two compact sets $S$ and $S'$. We also remark that, by previous construction both $F_n$ and $F_j$ contain the unit element $e$, thus $C_{j, n}\subset F_n$ and so $c\in F_n$ by the construction \eqref{tiling}.
Thus, for any $\nu\in A_m$,
$$W_{F_n}(\mu,\nu ) \geq W(c\mu, c\nu)\geq (c \mu) (c S_{\overline{A}_m})\cdot d(cS_{\overline{A}_m}, cS_{A_m})\geq \beta\cdot \gamma_i,$$
where we use the trivial observation that $c \nu$ is supported on $cS_{A_m}$ and that $c \mu$ is supported on the disjoint union of $cS_{\overline{A}_m}$ and $cS_{A_m}$.
Therefore we conclude $W_{F_n}(\mu, A_m)\geq \beta\cdot \gamma_i$.
\end{proof}

\begin{lem}\label{inter}
Let $\epsilon > 0$ and $\mu\in L_n$. Assume that $\mathcal A$ is a family of $(k_m-1)$-faces of $\Delta_{k_m}$ (with $m= m (j, c)\in M$) satisfying $W_{F_n}(\mu,A_m)<\epsilon$ for all $A\in \mathcal A$. Then we have
$$W_{F_n}\left(\mu,\bigcap_{A\in \mathcal A}A_m\right)\leq \diam_d(X)\cdot \frac{k_m\cdot \epsilon}{\gamma_i}\ \ \ \ \ \ \text{whenever}\ \bigcap_{A\in \mathcal A} A_m\neq \emptyset.$$
\end{lem}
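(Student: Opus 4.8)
The plan is to localise the argument to the single simplex factor $\Delta_{k_m}$ of $L_n\cong\prod_{j\in M}\Delta_{k_j}$, apply Lemma~\ref{decomposition} to the face $B:=\bigcap_{A\in\mathcal A}A$ of $\Delta_{k_m}$, and then repeat the scalar-extraction computation from the proof of Lemma~\ref{tech}. We may assume $\mathcal A\neq\emptyset$, the remaining case being trivial.

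After discarding repetitions, write $\mathcal A=\{A^s:s\in R\}$ for a set $R\subseteq\{1,\dots,k_m\}$, where $A^s=\{x\in\Delta_{k_m}:x_s=0\}$ is the $(k_m-1)$-face opposite to the vertex $v_s$. Then $\overline{A^s}=\{v_s\}$, the face $B=\bigcap_{s\in R}A^s$ equals $\{x\in\Delta_{k_m}:x_s=0\ \text{for all}\ s\in R\}$ with opposite face $\overline B=\conv\{v_s:s\in R\}$, and by the definition of the faces of a product one has $\bigcap_{A\in\mathcal A}A_m=B_m$; the hypothesis $\bigcap_{A\in\mathcal A}A_m\neq\emptyset$ forces $R\neq\{1,\dots,k_m\}$, so in particular $|R|\le k_m$. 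Unwinding $\Xi=\Psi\circ\Theta$ gives $\Xi(t)=\sum_{\mathbf i}\bigl(\prod_{j\in M}t_{j,\mathbf i_j}\bigr)\delta_{x_{\mathbf i}}$, hence $\supp\Xi(t)=\{x_{\mathbf i}:t_{j,\mathbf i_j}>0\ \text{for all}\ j\in M\}$; a point $x_{\mathbf i}$ therefore lies in $S_{\Xi(\overline B_m)}$ exactly when its $m$-th index $\mathbf i_m$ lies in $R$, that is, exactly when $x_{\mathbf i}\in S_{\Xi(\overline{A^s}_m)}$ for some $s\in R$. This yields the key identity $S_{\Xi(\overline B_m)}=\bigcup_{s\in R}S_{\Xi(\overline{A^s}_m)}$.

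Now apply Lemma~\ref{decomposition} to the face $B$ (here $1\le k_m-|R|\le k_m-1$): for the given $\mu\in L_n$ it produces $\mu'\in\Xi(B_m)=\bigcap_{A\in\mathcal A}A_m$ and $\mu''\in\Xi(\overline B_m)$ with $\mu=\lambda\mu'+(1-\lambda)\mu''$ and $1-\lambda=\mu\bigl(S_{\Xi(\overline B_m)}\bigr)$. Combining the key identity with the lower bound of Lemma~\ref{tech} applied to each $A^s$ — which gives $\mu\bigl(S_{\Xi(\overline{A^s}_m)}\bigr)\le W_{F_n}(\mu,A^s_m)/\gamma_i<\epsilon/\gamma_i$ — one obtains
$$1-\lambda=\mu\bigl(S_{\Xi(\overline B_m)}\bigr)\le\sum_{s\in R}\mu\bigl(S_{\Xi(\overline{A^s}_m)}\bigr)<\frac{|R|\,\epsilon}{\gamma_i}\le\frac{k_m\,\epsilon}{\gamma_i}.$$
Then, exactly as in Lemma~\ref{tech}, $g\mu=\lambda\,g\mu'+(1-\lambda)\,g\mu''$ for $g\in F_n$, so \eqref{repre} lets one pull out the scalar, $W(g\mu,g\mu')=(1-\lambda)\,W(g\mu'',g\mu')$, and taking the maximum over $g\in F_n$ together with $\diam_W(\MM(X))\le\diam_d(X)$ gives
$$W_{F_n}\Bigl(\mu,\bigcap_{A\in\mathcal A}A_m\Bigr)\le W_{F_n}(\mu,\mu')=(1-\lambda)\,W_{F_n}(\mu'',\mu')\le(1-\lambda)\,\diam_d(X)\le\frac{k_m\,\epsilon}{\gamma_i}\,\diam_d(X),$$
which is the claim; the limiting values $\lambda\in\{0,1\}$ require no separate treatment, since the chain above covers them automatically.

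The step I expect to be the main obstacle is the bookkeeping in the middle paragraph: one must move carefully between faces of the abstract product $\prod_{j\in M}\Delta_{k_j}$, their images under the (merely multi-affine, not affine) map $\Xi$, and the corresponding support sets inside $\MM(X)$, and in particular verify that the support of the opposite face of $\bigcap_{A\in\mathcal A}A$ is the union of the supports of the opposite faces $\overline A$. Once that combinatorial identity is secured, the remainder is a direct transcription of the proof of Lemma~\ref{tech}.
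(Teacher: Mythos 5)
Your proof is correct and essentially the same as the paper's. The paper applies Lemma~\ref{tech} a second time, as a black box, to the face $\bigcap_{A\in\mathcal A}A$ (whose opposite face has the support identity $S_{(\overline{\bigcap_{A\in\mathcal A}A})_m}=\bigcup_{A\in\mathcal A}S_{\overline{A}_m}$) to get $W_{F_n}(\mu,\bigcap_A A_m)\le\diam_d(X)\cdot\mu(S_{(\overline{\bigcap A})_m})$, then sums; you instead re-run the decomposition from Lemma~\ref{decomposition} and the scalar-extraction argument inside Lemma~\ref{tech} from scratch. The content, including the crucial combinatorial identity on supports of opposite faces, is identical, and your more explicit verification of that identity via the formula for $\supp\Xi(t)$ is a welcome unpacking of what the paper states without proof. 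One tiny remark: the nonemptiness hypothesis actually forces $|R|\le k_m-1$ (not merely $|R|\le k_m$), which is needed so that $B=\bigcap_{s\in R}A^s$ is a genuine $\ell$-face with $1\le\ell\le k_m-1$, as Lemma~\ref{decomposition} requires; you implicitly use this when you note $1\le k_m-|R|\le k_m-1$, so no harm done.
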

\begin{proof}
Applying Lemma \ref{tech} twice, we have
$\mu(S_{\overline{A}_m}) < \frac{\epsilon}{\gamma_i}$ for each $A\in \mathcal A$ and then
\begin{eqnarray*}
W_{F_n}\left(\mu,\bigcap_{A\in \mathcal A}A_m\right)&\leq & \diam_d(X)\cdot \mu\left(S_{\left(\overline{\bigcap_{A\in \mathcal A}A}\right)_m}\right)\ \left(\text{note that $\bigcap_{A\in \mathcal A} A_m = (\bigcap_{A\in \mathcal A}A)_m$}\right)\\
&\leq & \diam_d(X)\cdot \sum_{A\in \mathcal A}\mu(S_{\overline{A}_m})\ \left(\text{observe that $S_{\left(\overline{\bigcap_{A\in \mathcal A}A}\right)_m} = \bigcup_{A\in \mathcal A}S_{\overline{A}_m}$}\right)\\
&\leq & \diam_d(X)\cdot \frac{\epsilon}{\gamma_i}\cdot |\mathcal A| \leq \diam_d(X)\cdot \frac{k_m\cdot \epsilon}{\gamma_i}.
\end{eqnarray*}
This finishes the proof of the required inequality.
\end{proof}

\subsection{Proof of Main Theorem}

Now we are ready to prove our Main Theorem as follows.

\smallskip

To start our proof, we need the following result. Note that $\diam_d(X) > 0$, as we have assumed that the system $(X, \Gamma)$ has positive topological entropy.

\begin{lem} \label{coversep}
Any finite open cover $\alpha$ of $L_n$ satisfying $\diam_{W_{F_n}} (\alpha)\le \epsilon_i$ is separating, where
\begin{equation}\label{epsilon}
\diam_{W_{F_n}} (\alpha) = \max_{U\in \alpha}\ \diam_{W_{F_n}} (U)\ \ \text{and}\ \ \epsilon_i= \frac{\gamma_i^2}{\diam_d(X)}\cdot \left(\max_{j=n_i+1}^{n_{i+1}} 2^{|F_j|+1}\right)^{-1}> 0.
\end{equation}
\end{lem}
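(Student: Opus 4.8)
The plan is to argue by contradiction, pitting Lemma \ref{tech} against Lemma \ref{inter}. Suppose some finite open cover $\alpha$ of $L_n$ with $\diam_{W_{F_n}}(\alpha)\le\epsilon_i$ fails to be separating. Unwinding the definition, there exist an index $m=m(j,c)\in M$ (so that $n_i+1\le j\le n_{i+1}$ and $k_m=2^{|J_n\cap F_jc|}$), a subfamily $(U_s)_{s\in S}$ of $\alpha$, and $(k_m-1)$-faces $(F^s)_{s\in S}$ of $\Delta_{k_m}$ with $(F^s)_m\cap U_s\ne\emptyset$ for every $s\in S$, such that $B:=\bigcap_{s\in S}F^s\ne\emptyset$ and yet there is a measure $\mu\in\bigcap_{s\in S}U_s$ lying in the opposite face $\overline{B}_m$. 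The first thing to record is that $B$ is a genuine face: since $B\ne\emptyset$, the number of distinct faces occurring among the $F^s$ is at most $k_m-1$, so by the intersection property of $(k_m-1)$-faces recalled in \S\ref{prod}, $B$ is an $\ell$-face of $\Delta_{k_m}$ with $1\le\ell\le k_m-1$; hence $B_m$ and $\overline{B}_m$ are faces of $L_n$, and Lemmas \ref{tech} and \ref{inter} apply to them.

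Next I would produce an upper bound for $W_{F_n}(\mu,B_m)$. For each $s\in S$ pick $\mu_s\in (F^s)_m\cap U_s$; since $\mu,\mu_s\in U_s$ and $\diam_{W_{F_n}}(U_s)\le\epsilon_i$, we get $W_{F_n}(\mu,(F^s)_m)\le W_{F_n}(\mu,\mu_s)\le\epsilon_i$. Applying Lemma \ref{inter} to the family $\{F^s:s\in S\}$, noting $\bigcap_{s\in S}(F^s)_m=B_m\ne\emptyset$ — taking any $\epsilon>\epsilon_i$ in its statement and then letting $\epsilon\downarrow\epsilon_i$ — yields $W_{F_n}(\mu,B_m)\le\diam_d(X)\cdot k_m\epsilon_i/\gamma_i$. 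The constant $\epsilon_i$ in \eqref{epsilon} is designed precisely to make this negligible: from $k_m=2^{|J_n\cap F_jc|}\le 2^{|F_j|}\le\frac{1}{2}\max_{j'=n_i+1}^{n_{i+1}}2^{|F_{j'}|+1}$ one gets, after cancelling $\diam_d(X)$ and the maximum, $\diam_d(X)\cdot k_m\epsilon_i/\gamma_i\le\gamma_i/2<\gamma_i$.

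Finally I would contradict this with a lower bound. Since $\mu\in\overline{B}_m$, its support is contained in $S_{\overline{B}_m}$, so $\mu(S_{\overline{B}_m})=1$, and Lemma \ref{tech} applied with the face $A=B$ gives $W_{F_n}(\mu,B_m)\ge\mu(S_{\overline{B}_m})\cdot\gamma_i=\gamma_i$, incompatible with $W_{F_n}(\mu,B_m)\le\gamma_i/2$. Hence no such $\mu$ exists and $\alpha$ is separating. I do not anticipate a real obstacle: the argument is just the confrontation of Lemmas \ref{tech} and \ref{inter}, and the only points requiring care are checking that $B$ is an honest face (so those lemmas apply), reconciling the strict inequality in the hypothesis of Lemma \ref{inter} with the non-strict bound $\le\epsilon_i$ coming from $\diam_{W_{F_n}}(\alpha)\le\epsilon_i$, and verifying that the explicit value of $\epsilon_i$ absorbs the factor $k_m/\gamma_i$ — all routine bookkeeping.
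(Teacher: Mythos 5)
Your proposal is correct and takes essentially the same approach as the paper: both proofs confront the upper bound from Lemma \ref{inter} (which places $\bigcap_{q} U_q$ within $\gamma_i/2$ of $B_m$) against the $\gamma_i$-separation from Lemma \ref{tech}, the only cosmetic difference being that you phrase it as a contradiction while the paper argues directly that the two sets are disjoint. Your remark about reconciling the strict inequality in Lemma \ref{inter} by taking $\epsilon\downarrow\epsilon_i$ is a small technical point the paper silently elides.
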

\begin{proof}
Take arbitrarily $m\in M$ and, additionally, we assume that $(U_q)_{q\in J}$ is a subfamily of $\alpha$ and $(A^q)_{q\in J}$ is a family of $(k_m-1)$-faces of $\Delta_{k_m}$ such that $(A^q)_m\cap U_q\neq \emptyset$ for each $q\in J$.
It suffices to consider the case that both $\bigcap_{q\in J}U_q$ and $\bigcap_{q\in J} A^q$ are non-empty subsets.

\smallskip

Note that $\diam_{W_{F_n}} (\alpha)\le \epsilon_i$, according to Lemma \ref{inter} one has that $\bigcap_{q\in J}U_q$ lies in the
$\frac{\gamma_i}{2}$-neighborhood of $\bigcap_{q\in J} (A^q)_m$ (we have assumed that $\bigcap_{q\in J} A^q\neq \emptyset$ and then $\bigcap_{q\in J} (A^q)_m\neq \emptyset$).
In fact, take arbitrarily $\mu\in \bigcap_{q\in J}U_q$ and $\nu_q\in (A^q)_m\cap U_q$ for each $q\in J$, one has
$$W_{F_n} (\mu, (A^q)_m)\le W_{F_n} (\mu, \nu_q) \le  \diam_{W_{F_n}} (U_q) \le \epsilon_i.$$
Additionally, if we write $m= m (j, c)\in M$ for some $j= n_i+ 1, \cdots, n_{i+ 1}$ and $c\in C_{j,n}$, then $k_m= 2^{|J_n\cap F_j c|}\le 2^{|F_j|}$. Thus applying Lemma \ref{inter} we obtain
\begin{equation*}
W_{F_n} \left(\mu, \bigcap_{q\in J}(A^q)_m\right) \le \diam_d(X)\cdot \frac{k_m\cdot \epsilon_i}{\gamma_i}\le \frac{\gamma_i}{2}\ \ \ (\text{noting the construction \eqref{epsilon}}).
\end{equation*}

Note again that we have assumed $\bigcap_{q\in J} A^q\neq \emptyset$. In particular, $\bigcap_{q\in J} A^q$ is an $\ell$-face of $\Delta_{k_m}$ for some $1\le \ell\le k_m- 1$. Observing $\bigcap_{q\in J} (A^q)_m = (\bigcap_{q\in J} A^q)_m$, one has
\begin{equation} \label{es2}
W_{F_n}\left(\bigcap_{q\in J}(A^q)_m, \left(\overline{\bigcap_{q\in J}A^q}\right)_m\right)
\geq \gamma_i
\end{equation}
by applying Lemma \ref{tech}.
Therefore, combining the above estimation \eqref{es2} with arguments in previous paragraph we obtain that
the subsets $\bigcap_{q\in J} U_q$ and $\left(\overline{\bigcap_{q\in J}A^q}\right)_m$ are disjoint. Thus we conclude that the cover $\alpha$ is separating.
\end{proof}

We also need the following simple fact which is important for our estimation.

\begin{lem} \label{lem 1}
	$|F_n|\le \frac{2}{\delta} \cdot \sum_{j=n_i+1}^{n_{i+1}} |F_j|\cdot |C_j^{(n)}|$.
\end{lem}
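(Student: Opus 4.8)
The plan is to exploit the tiling identity \eqref{tiling} together with the density lower bound $|J_n| > \delta |F_n|$ and the definition \eqref{sub-set} of $C_j^{(n)}$. Recall that by \eqref{tiling-J} the set $J_n$ decomposes as the disjoint union $\bigsqcup_{j=n_i+1}^{n_{i+1}} \bigsqcup_{c \in C_{j,n}} (J_n \cap F_j c)$, so
\[
\delta |F_n| < |J_n| = \sum_{j=n_i+1}^{n_{i+1}} \sum_{c \in C_{j,n}} |J_n \cap F_j c|.
\]
The idea is to split the inner sum over $c \in C_{j,n}$ into the part coming from $c \in C_j^{(n)}$ and the part coming from $c \in C_{j,n} \setminus C_j^{(n)}$, and to bound each piece crudely.

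First I would bound the "large" contributions: for any $c \in C_{j,n}$ we trivially have $|J_n \cap F_j c| \le |F_j c| = |F_j|$, so $\sum_{c \in C_j^{(n)}} |J_n \cap F_j c| \le |F_j| \cdot |C_j^{(n)}|$. Next, the "small" contributions: by the very definition \eqref{sub-set}, if $c \in C_{j,n} \setminus C_j^{(n)}$ then $|J_n \cap F_j c| < \frac{\delta}{2}|F_j|$, and since $\{F_j c : c \in C_{j,n}\}$ are disjoint subsets of $F_n$ (for the fixed index $n$) we have $|C_{j,n}| \le |F_n|/|F_j|$; hence
\[
\sum_{j=n_i+1}^{n_{i+1}} \sum_{c \in C_{j,n} \setminus C_j^{(n)}} |J_n \cap F_j c| < \frac{\delta}{2}\sum_{j=n_i+1}^{n_{i+1}} |F_j| \cdot |C_{j,n}| = \frac{\delta}{2} \left| \bigsqcup_{j=n_i+1}^{n_{i+1}} \bigsqcup_{c\in C_{j,n}} F_j c \right| = \frac{\delta}{2}|F_n|,
\]
using \eqref{tiling} once more to identify the total cardinality. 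Combining, $\delta |F_n| < \sum_{j=n_i+1}^{n_{i+1}} |F_j| \cdot |C_j^{(n)}| + \frac{\delta}{2}|F_n|$, so $\frac{\delta}{2}|F_n| < \sum_{j=n_i+1}^{n_{i+1}} |F_j|\cdot |C_j^{(n)}|$, which rearranges to the claimed inequality.

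The only mild subtlety — and the step I would state most carefully — is the passage from "sum of $|J_n \cap F_j c|$ over the small $c$" to "$\frac{\delta}{2}$ times the total measure of all tiles". One must be sure the strict inequality $|J_n \cap F_j c| < \frac{\delta}{2}|F_j|$ is applied only on $C_{j,n}\setminus C_j^{(n)}$ and that padding the sum back up to all of $C_{j,n}$ only increases the right-hand side, so that \eqref{tiling} can be invoked to collapse $\sum_j |F_j||C_{j,n}|$ to $|F_n|$. Everything else is the disjointness bookkeeping already guaranteed by \eqref{tiling} and \eqref{tiling-J}, so there is no real obstacle; this is a short counting argument.
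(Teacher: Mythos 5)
Your proof is correct and follows essentially the same route as the paper: split the tiling decomposition of $J_n$ over $C_j^{(n)}$ versus $C_{j,n}\setminus C_j^{(n)}$, bound the first piece by $|F_j|\cdot|C_j^{(n)}|$, bound the second by $\tfrac{\delta}{2}|F_j|\cdot|C_{j,n}|$ using \eqref{sub-set}, collapse $\sum_j |F_j|\cdot|C_{j,n}|=|F_n|$ via \eqref{tiling}, and rearrange. (The parenthetical remark $|C_{j,n}|\le |F_n|/|F_j|$ is not actually needed, but it does no harm.)
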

\begin{proof}
Since $\sum_{j=n_i+1}^{n_{i+1}} |F_j|\cdot |C_{j, n}|=|F_n|$ by \eqref{tiling}, and from the construction one has
\begin{eqnarray*}
\delta |F_n|\le |J_n| & \le & \sum_{j=n_i+1}^{n_{i+1}} \left(\sum_{c\in C_j^{(n)}} |J_n\cap F_j c|+ \sum_{c\in C_{j, n}\setminus C_j^{(n)}} |J_n\cap F_j c|\right) \\
& \le &  \sum_{j=n_i+1}^{n_{i+1}} |F_j|\cdot |C_j^{(n)}| + \frac{\delta}{2}\sum_{j=n_i+1}^{n_{i+1}} |F_j|\cdot |C_{j, n}|\ \ (\text{observe \eqref{sub-set}}).
\end{eqnarray*}
Then the conclusion follows from the above estimations.
\end{proof}

Now we are ready to provide the proof of our Main Theorem as follows.

\begin{proof}[Proof of the  Main Theorem]
Recall $\epsilon_i> 0$ from the construction \eqref{epsilon}. Firstly we claim
\begin{equation} \label{prop}
\dim_{\epsilon_i}(\MM(X), W_{F_n})\geq \sum_{j=n_i+1}^{n_{i+1}} 2^{\lfloor \frac{\delta}{2}|F_j| \rfloor}\cdot |C_j^{(n)}|.
\end{equation}
We remark that $\dim_{\epsilon_i}(\MM(X), W_{F_n})< \infty$, see for example \cite[Proposition 4.6.2]{Coo15}.
Take arbitrarily a $(W_{F_n},\epsilon_i)$-injective map $f: L_n\rightarrow Z$ where $Z$ is a compact metric space satisfying $\dim(Z) < \infty$. In particular, $\diam_{W_{F_n}} (f^{-1}(z))<\epsilon_i$ for each $z\in Z$. It is not hard to obtain a finite open cover $\mathcal{V}$ of $Z$ such that $\diam_{W_{F_n}} (f^{- 1} V)< \epsilon_i$ for each $V\in \mathcal{V}$. Then by the definition of $\dim (Z)$, we may construct a finite open cover $\alpha$ of $L_n$ with its diameter at most $\epsilon_i$ and with its order at most $\dim(Z)$.
We remark that the cover $\alpha$ is separating by Lemma \ref{coversep}, and then by Lemma \ref{gen}, a generalized version of Lebesgue's lemma, we conclude
$$\dim (Z)\ge \text{\rm ord}(\alpha) \ge \sum_{j=n_i+1}^{n_{i+1}} \sum_{c\in C_{j,n}} 2^{|J_n\cap F_j c|}.$$
Finally by the arbitrariness of the map $f$ one has the estimation \eqref{prop} as follows:
	\begin{equation*}
	\dim_{\epsilon_i}(\MM(X), W_{F_n}) \geq \dim_{\epsilon_i}(L_n, W_{F_n})
\ge \sum_{j=n_i+1}^{n_{i+1}} \sum_{c\in C_{j, n}} 2^{|J_n\cap F_j c|}
\geq \sum_{j=n_i+1}^{n_{i+1}} 2^{\lfloor \frac{\delta}{2}|F_j| \rfloor}\cdot |C_j^{(n)}|.
	\end{equation*}

Now take arbitrarily $i\in \N$ and fix it. Applying both the estimation \eqref{prop} and Lemma \ref{lem 1} to each $n= n_k + 1$ with $k\ge i+ 1$ and then letting $k$ tend to $\infty$, one has (we remark that $\{F_{n_\ell + 1}: \ell\in\N\}$ is a F\o lner sequence of the amenable group $\Gamma$)
\begin{eqnarray} \label{fin}
\mdim(\mathcal{M}(X), \Gamma) & \ge & \lim_{k\to \infty} \frac{1}{|F_{n_k + 1}|} \dim_{\epsilon_i}(\MM (X), W_{F_{n_k + 1}})\nonumber \\
& \geq & \inf_{k\ge i+ 1} \frac{1}{|F_{n_k + 1}|} \dim_{\epsilon_i}(\MM (X), W_{F_{n_k + 1}})\nonumber \\
	&\ge & \inf_{k\ge i+ 1}  \frac{\sum_{j=n_i+1}^{n_{i+1}} 2^{\lfloor \frac{\delta}{2}|F_j| \rfloor}|C_j^{(n_k + 1)}|}{\frac{2}{\delta}\cdot \sum_{j=n_i+1}^{n_{i+1}} |F_j|\cdot |C_j^{(n_k + 1)}|} \ge \frac{\delta}{2}\cdot \min_{j=n_i+1}^{n_{i+1}} \frac{2^{\lfloor \frac{\delta}{2}|F_j| \rfloor}}{|F_j|}.
\end{eqnarray}
We remark that, by previous construction, $\{F_p: p\in\N\}$ is a F\o lner sequence of the countable amenable group $\Gamma$ which is infinite,
and so
$$\lim_{p\rightarrow \infty} |F_p| = \infty\ \ \ \ \text{hence}\ \ \ \ \lim_{i\rightarrow \infty} \min_{j=n_i+1}^{n_{i+1}} \frac{2^{\lfloor \frac{\delta}{2}|F_j| \rfloor}}{|F_j|} = \infty.$$
Now the conclusion follows from letting $i$ tend to $\infty$ in the estimation \eqref{fin}.
\end{proof}

\section*{Acknowledgement}

Part of the work was carried out during a series of visit of
Ruxi Shi to the School of Mathematical Sciences and Shanghai Center for Mathematical Sciences. He gratefully acknowledges the hospitality of Fudan University. 
Ruxi Shi was partially supported by Fondation Sciences math$\acute{e}$matiques de Paris, and
Guohua Zhang is supported by the
National Key Research and Development Program of China (No. 2021YFA1003204).

\bibliographystyle{alpha}


\begin{thebibliography}{BDV22}

\bibitem[BDV22]{BNDV}
Nilson~C. Bernardes, Jr., Udayan~B. Darji, and R\^{o}mulo~M. Vermersch.
\newblock Uniformly positive entropy of induced transformations.
\newblock {\em Ergodic Theory Dynam. Systems}, 42(1):9--18, 2022.

\bibitem[BS75]{Sigmun}
Walter Bauer and Karl Sigmund.
\newblock Topological dynamics of transformations induced on the space of
  probability measures.
\newblock {\em Monatsh. Math.}, 79:81--92, 1975.

\bibitem[BS22]{bs2022}
David Burguet and Ruxi Shi.
\newblock Topological mean dimension of induced systems.
\newblock {\em preprint}, arXiv:2206.10508, 2022.

\bibitem[BS23a]{burguet2021mean}
David Burguet and Ruxi Shi.
\newblock Mean dimension of continuous cellular automata.
\newblock {\em Israel J. Math.}, to appear, 2023.

\bibitem[BS23b]{BS23}
David Burguet and Ruxi Shi.
\newblock Multiplicity of topological systems.
\newblock {\em preprint}, arXiv:2307.08906, 2023.

\bibitem[Coo15]{Coo15}
Michel Coornaert.
\newblock {\em Topological dimension and dynamical systems}.
\newblock Universitext. Springer, Cham, 2015.
\newblock Translated and revised from the 2005 French original.

\bibitem[DHZ19]{DHZ}
Tomasz Downarowicz, Dawid Huczek, and Guohua Zhang.
\newblock Tilings of amenable groups.
\newblock {\em J. Reine Angew. Math.}, 747:277--298, 2019.

\bibitem[Gro99]{G}
Misha Gromov.
\newblock Topological invariants of dynamical systems and spaces of holomorphic
  maps. {I}.
\newblock {\em Math. Phys. Anal. Geom.}, 2(4):323--415, 1999.

\bibitem[GW95]{glasner1995quasi}
Eli Glasner and Benjamin Weiss.
\newblock Quasi-factors of zero-entropy systems.
\newblock {\em J. Amer. Math. Soc.}, 8(3):665--686, 1995.

\bibitem[HY06]{HuangYe}
Wen Huang and Xiangdong Ye.
\newblock A local variational relation and applications.
\newblock {\em Israel J. Math.}, 151:237--279, 2006.

\bibitem[KL07]{ker}
David Kerr and Hanfeng Li.
\newblock Independence in topological and {$C^*$}-dynamics.
\newblock {\em Math. Ann.}, 338(4):869--926, 2007.

\bibitem[Leb11]{Lebesgue}
Henri Lebesgue.
\newblock Sur la non-applicabilit\'{e} de deux domaines appartenant
  respectivement \`a des espaces \`a {$n$} et {$n+p$} dimensions.
\newblock {\em Math. Ann.}, 70(2):166--168, 1911.

\bibitem[LL18]{LL}
Hanfeng Li and Bingbing Liang.
\newblock Mean dimension, mean rank, and von {N}eumann--{L}\"{u}ck rank.
\newblock {\em J. Reine Angew. Math.}, 739:207--240, 2018.

\bibitem[Lin01]{Linden}
Elon Lindenstrauss.
\newblock Pointwise theorems for amenable groups.
\newblock {\em Invent. Math.}, 146(2):259--295, 2001.

\bibitem[LW00]{LindenstraussWeiss2000MeanTopologicalDimension}
Elon Lindenstrauss and Benjamin Weiss.
\newblock Mean topological dimension.
\newblock {\em Israel J. Math.}, 115:1--24, 2000.

\bibitem[LW23]{LiuW}
Kairan Liu and Runju Wei.
\newblock Relative uniformly positive entropy of induced amenable group
  actions.
\newblock {\em Ergodic Theory Dynam. Systems}, to appear, 2023.

\bibitem[LYY15]{LiYanYe}
Jie Li, Kesong Yan, and Xiangdong Ye.
\newblock Recurrence properties and disjointness on the induced spaces.
\newblock {\em Discrete Contin. Dyn. Syst.}, 35(3):1059--1073, 2015.

\bibitem[MT15]{T1}
Shinichiroh Matsuo and Masaki Tsukamoto.
\newblock Brody curves and mean dimension.
\newblock {\em J. Amer. Math. Soc.}, 28(1):159--182, 2015.

\bibitem[OW87]{OW}
Donald~S. Ornstein and Benjamin Weiss.
\newblock Entropy and isomorphism theorems for actions of amenable groups.
\newblock {\em J. Analyse Math.}, 48:1--141, 1987.

\bibitem[Tsu18]{T2}
Masaki Tsukamoto.
\newblock Mean dimension of the dynamical system of {B}rody curves.
\newblock {\em Invent. Math.}, 211(3):935--968, 2018.

\bibitem[Tsu19]{tsukamoto2019mean}
Masaki Tsukamoto.
\newblock Mean dimension of full shifts.
\newblock {\em Israel J. Math.}, 230(1):183--193, 2019.

\bibitem[Wei01]{Weiss}
Benjamin Weiss.
\newblock Monotileable amenable groups.
\newblock In {\em Topology, ergodic theory, real algebraic geometry}, volume
  202 of {\em Amer. Math. Soc. Transl. Ser. 2}, pages 257--262. Amer. Math.
  Soc., Providence, RI, 2001.

\end{thebibliography}

\end{document}